\numberwithin{equation}{section}
\numberwithin{figure}{section}
\newenvironment{proof1}%
{\begin{trivlist} \item[]{{\em Proof} }}%
	{\hspace*{\fill}$\rule{.3\baselineskip}{.35\baselineskip}$\end{trivlist}}
\newcommand{\beq}{\begin{equation}}
	\newcommand{\eeq}{\end{equation}}
\newtheorem{theorem}{Theorem}[section]
\newtheorem{lemma}{Lemma}[section]
\newtheorem{assumption}{Assumption}[section]
\newtheorem{corollary}{Corollary}[section]
\newtheorem{proposition}{Proposition}[section]
\newtheorem{remark}{Remark}[section]
\newtheorem*{lemma*}{Lemma}
\begin{document}
	
	\title[Morse index for the ground state]{\bf Morse index for the ground state in the energy supercritical Gross--Pitaevskii equation}
	
	\author{Dmitry E. Pelinovsky}
	\address[D.E. Pelinovsky]{Department of Mathematics and Statistics, McMaster University,
		Hamilton, Ontario, Canada, L8S 4K1}
	\email{dmpeli@math.mcmaster.ca}
	
	\author{Szymon Sobieszek}
	\address[S. Sobieszek]{Department of Mathematics and Statistics, McMaster University,	Hamilton, Ontario, Canada, L8S 4K1}
	\email{sobieszs@mcmaster.ca}

	
	
	\begin{abstract}
		The ground state of the energy super-critical Gross--Pitaevskii equation with a harmonic potential converges in the energy space to the singular solution in the limit of large amplitudes. The ground state can be represented by a solution curve which has either oscillatory or monotone behavior, depending on the dimension of the system and the power of the focusing nonlinearity. We address here the monotone case for the cubic nonlinearity in the spatial dimensions $d \geq 13$. By using the shooting method for the radial Schr\"{o}dinger operators, we prove that the 	Morse index of the ground state is finite and is independent of the (large) amplitude. It is equal to the Morse index of the limiting singular solution, which can be computed from numerical approximations. 
		The numerical results suggest that the Morse index of the ground state is one and that it is stable in the time evolution of the cubic Gross--Pitaevskii equation in dimensions $d \geq 13$.
	\end{abstract}
	
	\date{\today}
	\maketitle
	
	\section{Introduction}
	
	We consider the stationary Gross-Pitaevskii equation with a harmonic potential, 
	\begin{equation}
		\label{statGP}
		(-\Delta u +|x|^2) u - |u|^{2p} u = \lambda u\,,
	\end{equation}
	where $x \in \mathbb{R}^d$, $\lambda \in \mathbb{R}$, and $u \in \mathbb{R}$. Existence of its ground state (a positive and radially decreasing solution) has been addressed before in the energy subcritical \cite{Fuk,KW}, 	critical \cite{Selem2011}, and supercritical \cite{SK2012,Selem2013} regimes, where the critical exponent is $p = \frac{2}{d-2}$ if $d \geq 3$. We are concerned here with the energy supercritical case of the focusing Gross--Pitaevskii equation. Scattering in the defocusing version of energy supercritical equations 
	was studied in \cite{kilip1,kilip2,kilip3}.
	
The stationary Gross--Pitaevskii equation (\ref{statGP}) is the Euler--Lagrange equation for the action functional $\Lambda_{\lambda}(u) = E(u) - \lambda M(u)$, where $E(u)$ and $M(u)$ are the energy and mass given by  
	\begin{equation}
	\label{mass} 
	M(w) = \int_{\mathbb{R}^d} |u|^2 dx
	\end{equation}
	and
	\begin{equation}
	\label{energy-E} 
	E(w) = \int_{\mathbb{R}^d} \left( |\nabla u|^2 + |x|^2 |u|^2 - \frac{1}{p+1} |u|^{2p+2} \right) dx.
	\end{equation}
The energy and mass are formally the conserved quantities in the evolution of the time-dependent Gross--Pitaevskii equation. They are defined in the energy space  $\mathcal{E}\cap L^{2p+2}_r$, where 
	\begin{equation}
		\label{energy-space}
		\mathcal{E} := \left\{ u \in L^2_r(\mathbb{R}^+) : \quad u' \in L^2_r(\mathbb{R}^+), \quad r u \in L^2_r(\mathbb{R}^+) \right\}
	\end{equation}
	and $L^q_r$ denotes the space of radially symmetric $L^q(\mathbb{R}^d)$ functions. 
	
The ground state of the stationary equation (\ref{statGP}) can be obtained variationally in the energy subcritical case $p (d-2) < 2$, but the variational methods are not applicable in the energy critical and supercritical cases $p(d-2) \geq 2$ if $d \geq 3$. In all cases, 
the ground state forms a family which appears as the curve on the $(\lambda,b)$ plane, where $b := u(0) \equiv \| u \|_{L^{\infty}}$ is referred to as {\em the amplitude}. The large-amplitude limit is the limit as $b \to \infty$. In the energy supercritical regime, it was proven in \cite{Selem2013} that there exists $\lambda_{\infty} \in (0,d)$ such that $\lambda \to \lambda_{\infty}$ as $b \to \infty$ along the solution curve.  It was discovered in our recent work \cite{BIZON2021112358} that there exists $d_*(p)$ such that the solution curve is oscillatory for $2 + \frac{2}{p} < d < d_*(p)$ and monotone for $d > d_*(p)$. 
	
	For the sake of simplicity, we have been working with the cubic nonlinearity, $p = 1$, for which 
	$d_*(p=1) = 8 + 2 \sqrt{6}$. We will continue working with the cubic nonlinearity here. Figure \ref{fig:blambda} shows the dependence 
	of $\lambda(b)$, which is oscillatory for $5 \leq d \leq 12$ 
	and monotone for $d \geq 13$.
	
		\begin{figure}[htp!]
		\centering
		\includegraphics[width=0.45\textwidth]{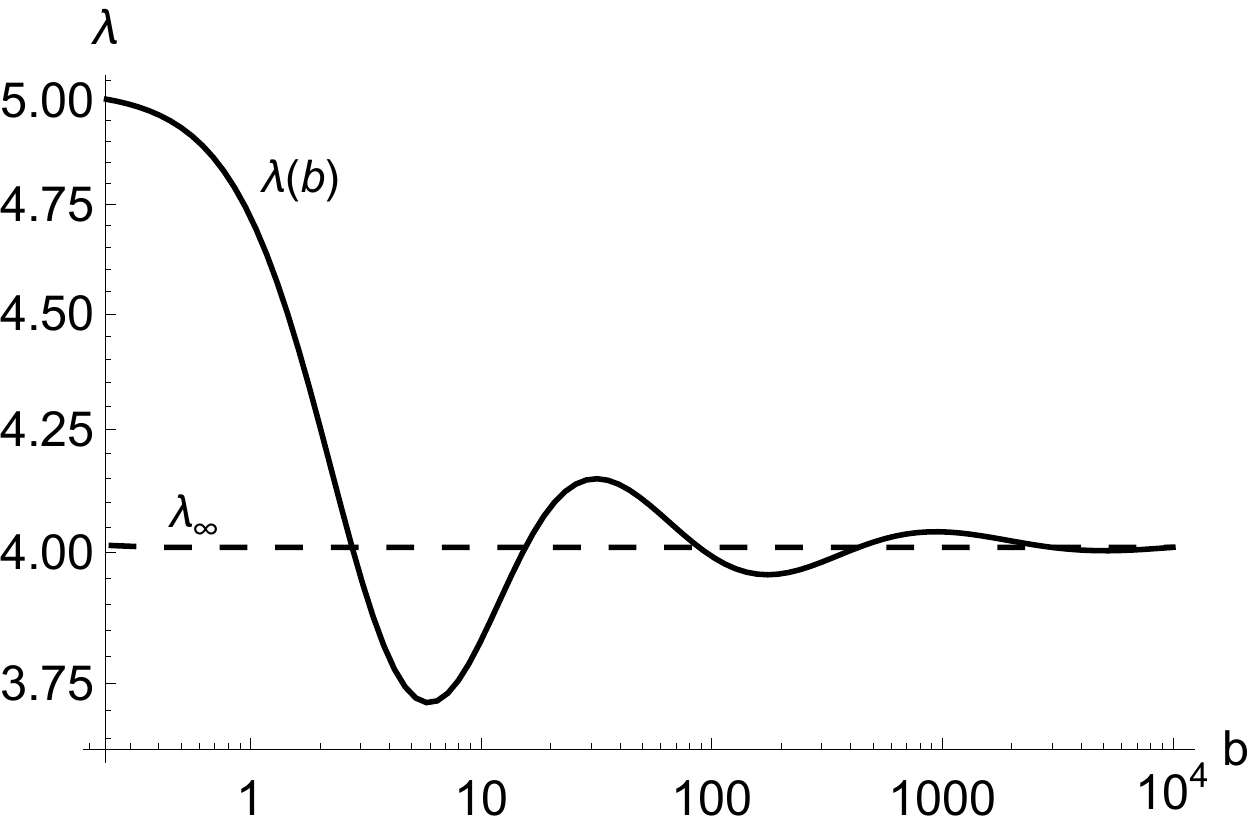}\qquad
		\includegraphics[width=0.45\textwidth]{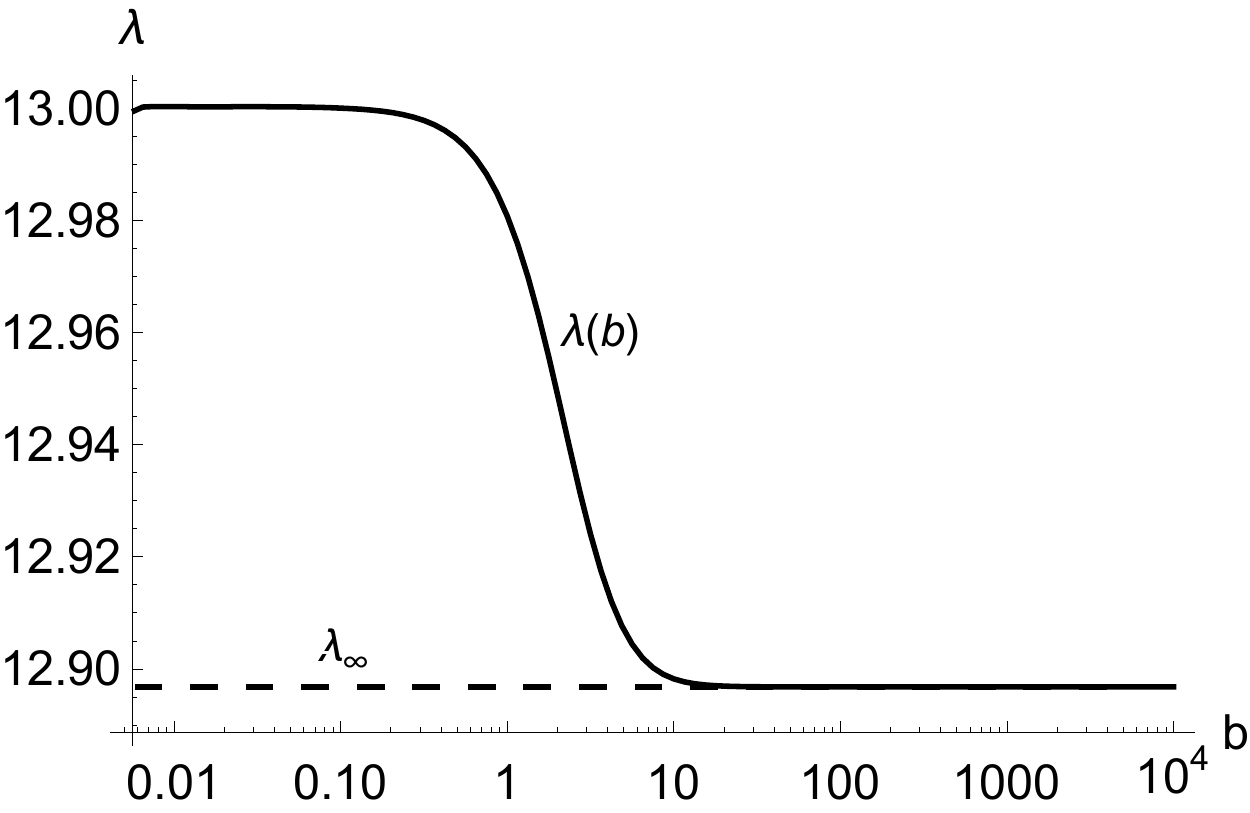}
		\caption{Graph of $\lambda$ versus $b$ for the ground state of the boundary-value problem (\ref{statGPrad}) for $d=5$ (left) and $d=13$ (right).}
		\label{fig:blambda}
	\end{figure}
	
	A similar duality between the oscillatory and monotone behaviors was 
	discovered for the classical Liouville--Bratu--Gerlfand problem in \cite{JL} 
	and explored in \cite{Budd1989,Budd_Norbury1987,DF,MP} for the stationary focusing nonlinear Schr\"odinger equation in a ball and without a harmonic potential. The similarity is explained by the same linearization of the stationary equation near the origin after the Emden--Fowler transformation \cite{F}. Another example of the oscillatory and monotone behaviors 
	was considered in \cite{Ficek} for the Schr\"{o}dinger--Newton--Hooke model.
	
	Let us define the ground state of the stationary equation (\ref{statGP}) 
	in radial variable $r = |x|$ as a solution of the following boundary-value problem:
	\begin{equation}
		\label{statGPrad}
		\left\{ \begin{array}{ll}
			\mathfrak{u}''(r) + \frac{d-1}{r} \mathfrak{u}'(r) - r^2 \mathfrak{u}(r) + \lambda \mathfrak{u}(r) + \mathfrak{u}(r)^3 = 0, \quad & r > 0, \\
			\mathfrak{u}(r) > 0, \qquad \qquad \mathfrak{u}'(r) < 0, \quad & \\
			\lim\limits_{r \to 0} \mathfrak{u}(r) < \infty, \quad 
			\lim\limits_{r \to \infty} \mathfrak{u}(r) = 0. & \end{array} \right.
	\end{equation}
	Any solution of the boundary-value problem (\ref{statGPrad}) 
	belongs to $\mathcal{E} \cap L^{\infty}$, since $\mathfrak{u} \in C^2(0,\infty)$ and $\mathfrak{u}(r) \to 0$ decays fast as $r \to \infty$.
	
	As is well understood since the pioneering work in \cite{JL}, 
	the ground state of the boundary-value problem (\ref{statGPrad}) 
	can be found from the family of solutions 
	to the following initial-value problem:
	\begin{equation}
		\label{statGPshoot}
		\left\{ \begin{array}{ll}
			f_b''(r) + \frac{d-1}{r} f_b'(r) - r^2 f_b(r) + \lambda f_b(r) + f_b(r)^3 =0,\quad & r > 0, \\
			f_b(0)=b, \quad f_b'(0)=0, & \end{array} \right.
	\end{equation}
	where $b>0$ is an arbitrary parameter. By Theorem 1.1 in our previous work \cite{BIZON2021112358}, for any $b>0$ and $d\geq 4$, there exists some $\lambda \in(d-4,d)$ such that the unique solution $f_b \in C^2(0,\infty)$ to the initial-value problem \eqref{statGPshoot} is monotonically decaying to zero as $r\to\infty$, making it a ground state $\mathfrak{u} \equiv \mathfrak{u}_b \in \mathcal{E}\cap L^{\infty}$ of the boundary-value problem \eqref{statGPrad}. That value of $\lambda$ is denoted as $\lambda(b)$. The mapping $b \mapsto \lambda(b)$ defines a solution curve on the $(\lambda,b)$ plane. Uniqueness of $\lambda(b)$ is an open problem for $d \geq 4$, whereas Figure \ref{fig:blambda} suggests that $\lambda(b)$ is unique for every $b > 0$.
	
	{\em The purpose of this work is to study the Morse index of the ground state $\mathfrak{u}_b$.} It is defined as the number of negative eigenvalues of the linearized operator $\mathcal{L}_b$ given by 
	\begin{equation}
		\label{eq:Lb_op_r}
		\mathcal{L}_b := -\frac{d^2}{dr^2} - \frac{d-1}{r} \frac{d}{dr} + r^2 - \lambda(b) -3 \mathfrak{u}_b^2(r).
	\end{equation}
	Since $\mathcal{E}$ is the form domain of $\mathcal{L}_b$, we can write 
$\mathcal{L}_b : \mathcal{E}\mapsto \mathcal{E}^*$, where $\mathcal{E}^*$ is the dual of $\mathcal{E}$ with respect to the scalar product in $L^2_r$.

	Assuming $C^1$ property of $\mathfrak{u}_b$ in $b$ and differentiating the initial-value problem (\ref{statGPshoot}) with $\lambda = \lambda(b)$ in $b$, we can see that $\mathcal{L}_b \partial_b \mathfrak{u}_b = \lambda'(b) \mathfrak{u}_b$, where $\partial_b \mathfrak{u}_b \in \mathcal{E}$. Hence, any value of $b$ for which $\lambda'(b) = 0$ corresponds to zero eigenvalue being in the spectrum 
	of $\mathcal{L}_b$ in $L^2_r$. 
	
	Although the converse is not known, this property implies 
	that the oscillatory case is very different from the monotone case, 
	where the former has infinitely many crossing of zero eigenvalue of $\mathcal{L}_b$ in the parameter continuation in $b$ as $b \to \infty$ whereas the latter does not have any eigenvalue crossing as $b \to \infty$, see also Figure \ref{fig:blambda}. This suggests 
	that the Morse index should be well defined in the monotone case, 
independently of $b$ for large values of $b$. This is in fact the main result 
	which we formulate as the following theorem. 
	
	\begin{theorem}
		\label{theorem-main} 
		For every $d \geq 13$, there exists $b_0 > 0$ such that the Morse index of $\mathcal{L}_b : \mathcal{E}\mapsto \mathcal{E}^*$ is finite and is independent of $b$ for every $b \in (b_0,\infty)$.
	\end{theorem}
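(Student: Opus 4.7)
The strategy is to combine a spectral-theoretic argument for finiteness with a perturbation/continuity argument that rules out an eigenvalue of $\mathcal{L}_b$ crossing zero as $b$ is varied through large values. For the first step, I would note that $\mathcal{L}_b$ is a self-adjoint, lower semibounded operator on $L^2_r$ with form domain $\mathcal{E}$, whose potential $r^2-\lambda(b)-3\mathfrak{u}_b^2$ is dominated at infinity by the confining term $r^2$. Because $\mathfrak{u}_b$ decays rapidly and the harmonic trap supplies compact resolvent for the unperturbed operator $-\Delta+r^2$, the resolvent of $\mathcal{L}_b$ is compact, its spectrum is purely discrete, bounded below, and accumulates only at $+\infty$. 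This yields finiteness of the Morse index $n_-(\mathcal{L}_b)$ for each fixed $b$. Standard analytic perturbation theory further implies that each eigenvalue depends continuously on $b$, so $n_-(\mathcal{L}_b)$ can change only at values where $0\in\sigma(\mathcal{L}_b)$. It therefore suffices to produce a $b_0>0$ such that $\mathcal{L}_b v=0$ has no nontrivial solution in $\mathcal{E}$ for every $b>b_0$.

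For the reduction to an ODE problem I would use the shooting method referred to in the abstract. Let $\varphi_b$ denote the unique solution of $\mathcal{L}_b\varphi=0$ on $(0,\infty)$ with $\varphi_b(0)=1$, $\varphi_b'(0)=0$; then $0$ is an eigenvalue in $L^2_r$ if and only if $\varphi_b$ decays at infinity, and by Sturm oscillation theory the Morse index equals the number of zeros of $\varphi_b$ on $(0,\infty)$ whenever $0\notin\sigma(\mathcal{L}_b)$. I would then pass to the limit $b\to\infty$. By the convergence results used to establish Theorem~1.1 of \cite{BIZON2021112358}, the family $\mathfrak{u}_b$ converges uniformly on compact subsets of $(0,\infty)$ to the singular solution $\mathfrak{u}_\infty$, which has the Emden--Fowler asymptotics $\mathfrak{u}_\infty(r)\sim c_\infty r^{-1/p}$ near $r=0$. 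The limiting operator $\mathcal{L}_\infty$ therefore carries an inverse-square potential at the origin, whose Hardy coefficient lies in the subcritical range precisely under the monotonicity condition $d>d_\ast(p)$, so $\mathcal{L}_\infty$ is itself self-adjoint with compact resolvent and finite Morse index. The corresponding distinguished solution $\varphi_\infty$ has a finite number $N$ of zeros on $(0,\infty)$, and locally uniform convergence of the ODE coefficients transfers this zero count to $\varphi_b$ for all sufficiently large $b$.

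The hard part is producing a uniform-in-$b$ control of the tail and the origin of $\varphi_b$ that prevents zeros from entering or leaving $(0,\infty)$ as $b\to\infty$. Near $r=0$ we must handle a potential that becomes singular in the limit: I would rescale via the Emden--Fowler change of variables $s=\log r$ and match the shooting trajectory of $\varphi_b$ to that of $\varphi_\infty$ on an interval shrinking with $b^{-p}$, using uniform stability of the transformed autonomous ODE around its equilibrium. Near $r=\infty$ I would exploit the exponential barrier produced by $r^2-\lambda(b)$ to show that any zero of $\varphi_b$ is confined to a compact $r$-interval independent of $b$, and that the tail of $\varphi_b$ grows like the Weber-type non-$L^2$ solution, preventing $\varphi_b\in\mathcal{E}$. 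Combining these two one-sided controls with the Sturm comparison result localizes the zeros of $\varphi_b$ in a fixed compact interval, where convergence $\varphi_b\to\varphi_\infty$ is uniform and the number of zeros is preserved. The Morse index is then equal to $N$, independent of $b>b_0$, as claimed. The assumption $d\geq 13 > 8+2\sqrt{6}$ is used in precisely two places: to put $\mathcal{L}_\infty$ in the subcritical Hardy regime, and to guarantee the monotonicity of $\lambda(b)$ which underlies the absence of crossings observed numerically in Figure~\ref{fig:blambda}.
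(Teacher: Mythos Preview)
Your overall architecture---compact resolvent for finiteness, then rule out $0\in\sigma(\mathcal{L}_b)$ for large $b$ via a shooting/Sturm argument and comparison to the singular limit---is the paper's strategy, but the critical near-origin step has a genuine gap. You propose to ``match the shooting trajectory of $\varphi_b$ to that of $\varphi_\infty$ on an interval shrinking with $b^{-p}$'', yet there is no regular-at-zero solution $\varphi_\infty$ of $\mathcal{L}_\infty v=0$ to match to: both fundamental solutions diverge like $r^{\kappa_\pm-1}$ at the origin. What the paper actually does is work after Emden--Fowler with \emph{two} solutions of $\mathcal{M}_b\gamma=0$, namely $\partial_b\Psi_b$ (essentially your $\varphi_b$, regular as $t\to-\infty$) and $\partial_c\Psi_{c(b)}$ (decaying as $t\to+\infty$). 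The first is approximated not by anything built from $\mathfrak{u}_\infty$ but by $b^{-1}\Theta'$, where $\Theta$ is the heteroclinic of the autonomous truncation \eqref{eq:theta_nonlinear}; the second is approximated by $\partial_c\Psi_\infty$. Evaluating both at a common matching point $t=T+(a-1)\log b$ produces incompatible $b$-scalings ($b^{a\kappa_+-1}$ versus $b^{-\kappa_-(1-a)}$), forcing the two solutions to be linearly independent (Lemma~\ref{lemma-1}) and hence $0\notin\sigma(\mathcal{L}_b)$. Your single-solution approach misses this intermediate heteroclinic layer; the phrase ``uniform stability of the transformed autonomous ODE around its equilibrium'' gestures toward it but does not supply the two-sided matching.

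Two further points. Your claim that ``the tail of $\varphi_b$ grows like the Weber-type non-$L^2$ solution'' \emph{is} the statement $0\notin\sigma(\mathcal{L}_b)$ and cannot be asserted without the analysis above; the barrier $r^2-\lambda(b)$ confines zeros to a bounded set but does not by itself forbid $\varphi_b$ from being the decaying solution at infinity. Moreover, the paper's matching rests on two nondegeneracy hypotheses you do not identify (Assumptions~\ref{assumption-1} and~\ref{assumption-2}: that $\Theta(t)\to\sqrt{d-3}$ at the generic rate $e^{\kappa_+t}$ and that $\partial_c\Psi_\infty(t)$ diverges at the generic rate $e^{\kappa_-t}$); without them the leading coefficients $A_0$, $L_\infty$ in the matching may vanish and the linear-independence argument collapses. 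Finally, $d\ge13$ is not used ``to guarantee the monotonicity of $\lambda(b)$'' as you write---that monotonicity is neither proved nor invoked in the proof---but only to make $\kappa_\pm$ real, which is exactly your correct observation about the Hardy subcriticality of $\mathcal{L}_\infty$.
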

	
	\begin{remark}
		Regarding the Morse index for the ground state in the energy supercrticial case, 
		we are only aware of the works \cite{GuoWei,KikuchiWei}, where the Morse index was estimated in the monotone case for the limiting singular solutions of the Dirichlet problem in a ball. We believe that the conclusion of Theorem \ref{theorem-main} and the technique behind its proof remain valid for other problems in the monotone case, e.g. for the nonlinear Schr\"{o}dinger equation in a ball. 
	\end{remark}
	
	\begin{remark}
		By the Lyapunov--Schmidt reduction technique (see, e.g., \cite{SK2012}), 
		the solution curve satisfies $\lambda(b) \to d$ and $\mathfrak{u}_b \to 0$ as $b \to 0$, where the Morse index of $\mathcal{L}_b$ in $L^2_r$ is equal to one. If the Morse index is equal to one for $b > b_0$ in Theorem \ref{theorem-main}, then it is quite possible 
		that the Morse index remains one for every $b \in (0,\infty)$. Since the ground state under these conditions is orbitally stable in the time evolution of the Gross--Pitaevskii equation in $\mathcal{E}\cap L^4_r$ if the mapping of $\lambda \mapsto \| \mathfrak{u}_b \|^2_{L^2_r}$ is monotonically decreasing (see, e.g., Theorem 4.8 in \cite{Pel-book}), it is rather interesting 
		to observe that the transition from the oscillatory case for $5 \leq d \leq 12$ to the monotone case $d \geq 13$ may enforce stability of the ground state.
	\end{remark}
	
	We shall now explain the strategy to prove Theorem \ref{theorem-main}. 
	We use the limiting singular solution $\mathfrak{u}_{\infty} \in \mathcal{E}\cap L^4_r$ which exists 
	for a particular value of $\lambda = \lambda_{\infty}$ if $d \geq 5$ \cite{Selem2013}. It was also established in \cite{Selem2013} that $\mathfrak{u}_b\to \mathfrak{u}_\infty$ in $\mathcal{E}$ and $\lambda(b)\to\lambda_\infty$ as $b\to\infty$. 
	Uniqueness of $\lambda_{\infty}$ is also an open problem, see the discussion in \cite{BIZON2021112358}. 
	
	Figure \ref{fig:ub_u_inf_sol} shows the ground state $\mathfrak{u}_b(r)$  for two values of $b$ and the limiting singular solution $\mathfrak{u}_{\infty}(r)$ for $d = 13$. The discrepancy between the two solutions moves to smaller values of $r$ if the value of $b$ is increased. When $b = 10$, the difference between $\mathfrak{u}_b$ and $\mathfrak{u}_{\infty}$ becomes invisible on the scale used in Figure \ref{fig:ub_u_inf_sol}.
	
		\begin{figure}[htp!]
		\centering
		\includegraphics[width=0.7\textwidth]{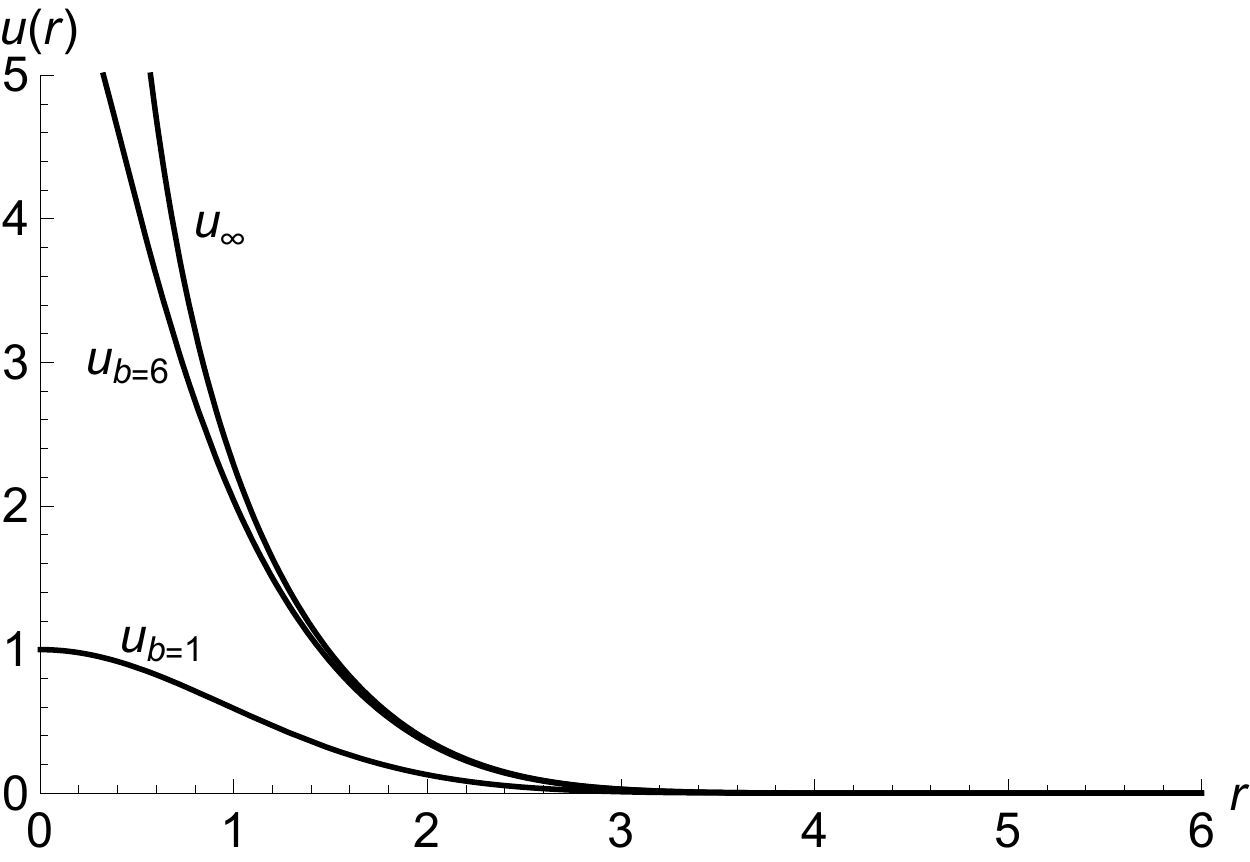}
		\caption{Graph of the ground state $\mathfrak{u}_b$ for $b = 1$ and $b = 6$ in comparison with the limiting singular solution $\mathfrak{u}_\infty$ for $d=13$.}
		\label{fig:ub_u_inf_sol}
	\end{figure}
		
	Since the limiting singular solution $\mathfrak{u}_{\infty}$ satisfies the following divergent behavior
	\begin{equation}
		\label{eq:u_inf_asymptotics}
		\mathfrak{u}_\infty(r)=\frac{\sqrt{d-3}}{r}\left[1+\mathcal{O}\left(r^2\right)\right], \quad \text{as } r\to 0,
	\end{equation}
	it is natural to introduce $F_{\infty}(r) := r \mathfrak{u}_{\infty}(r)$ and define it from the family of solutions to the following initial-value problem
	\begin{equation}
		\label{statGPsingular}
		\left\{ \begin{array}{ll}
			F''(r) + \frac{d-3}{r} F'(r) - \frac{d-3}{r^2} F(r) - r^2 F(r) + \lambda F(r) + \frac{1}{r^2} F(r)^3 = 0,\quad & r > 0, \\
			F(0) = \sqrt{d-3}, \quad F'(0)=0. & \end{array} \right.
	\end{equation}
	By Theorem 1.2 in \cite{BIZON2021112358} (based on the earlier work in \cite{Selem2013}), for any $d \geq 5$, there exists some $\lambda_{\infty} \in (d-4,d)$ such that the unique solution $F_{\infty} \in C^2(0,\infty)$ to the initial-value problem (\ref{statGPsingular}) is monotonically decreasing to zero as $r \to \infty$, making it the limiting singular solution $\mathfrak{u}_{\infty} \in \mathcal{E} \cap L^4_r$ by the transformation $\mathfrak{u}_{\infty}(r) = r^{-1} F_{\infty}(r)$.

	By Theorem 1.3 in \cite{BIZON2021112358}, convergence of $\lambda(b) \to \lambda_{\infty}$ as $b \to \infty$ is oscillatory for $5\leq d \leq 12$ and monotone for $d\geq 13$, see Figure \ref{fig:blambda}. The latter case is the only case we are interested in here. 
	
	In order to characterize the Morse index of $\mathcal{L}_b$, we use the Emden--Fowler transformation \cite{F} for the nonlinear equation in (\ref{statGPsingular}) and study two families of solutions.	
	One family is obtained from $F_b(r) := r \mathfrak{u}_b(r)$ and is parametrized 
	by its parameter $b$ from the behavior as $r \to 0$. The other family is 
	parametrized by another parameter $c$ from the decaying behavior 
	as $r \to \infty$. The second family is considered in a local 
	neighborhood of the limiting singular solution $F_{\infty}(r) = r \mathfrak{u}_{\infty}(r)$. Both families have $C^1$ property with respect to 
	their parameters and their derivatives with respect to these parameters are solutions of the homogeneous equation $\mathcal{L}_b v = 0$ 
	after the inverse Emden--Fowler transformation, e.g., $v(r) = r^{-1} \partial_b F_b(r)$. The proof of Theorem \ref{theorem-main} is achieved from the Sturm's Oscillation Theorem (see, e.g., Theorem 3.5 in \cite{simon}) 
	by showing that the two derivatives have finitely many oscillations 
	and there exists $b_0 > 0$ such that the two derivatives are linearly 
	independent for every $b \in (b_0,\infty)$. 
	
	As a by-product of our approach, we establish the equivalence of the 
	Morse index of $\mathcal{L}_b$ in $L^2_r$ with the Morse index 
	of the limiting operator $\mathcal{L}_{\infty} := \lim\limits_{b \to \infty} \mathcal{L}_b$ which is computed 
	at the limiting singular solution for $d \geq 5$:
	\begin{equation}
		\label{eq:Lb_op_infty}
		\mathcal{L}_{\infty} := -\frac{d^2}{dr^2} - \frac{d-1}{r} \frac{d}{dr} + r^2 - \lambda_{\infty} -3 \mathfrak{u}_{\infty}^2(r).
	\end{equation}
	Compared to $\mathcal{L}_b : \mathcal{E}\mapsto \mathcal{E}^*$, where the potential $-3 \mathfrak{u}_b^2(r)$ is bounded from below, 
	the potential $-3 \mathfrak{u}_{\infty}^2(r)$ is unbounded from below, 
	hence $\mathcal{L}_{\infty} : \mathcal{E}_{\infty} \mapsto \mathcal{E}_{\infty}^*$, where $\mathcal{E}_{\infty} = \{ u \in \mathcal{E} : \; r^{-1} u \in L^2_r \}$ and $\mathcal{E}_{\infty}^*$ is the dual of $\mathcal{E}_{\infty}$ with respect to the scalar product in $L^2_r$.
	
	The following theorem gives the precise result on the Morse index of the two linear operators. 
	
	\begin{theorem}
		\label{theorem-main-infty} 
		For every $d \geq 13$, there exists $b_0 > 0$ such that the Morse index of $\mathcal{L}_b : \mathcal{E}\mapsto \mathcal{E}^*$ coincides with 
		the Morse index of $\mathcal{L}_{\infty} : \mathcal{E}_{\infty} \mapsto \mathcal{E}^*_{\infty}$.
	\end{theorem}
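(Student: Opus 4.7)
The plan is to exploit the same Emden--Fowler-and-Sturm framework that the paper uses for Theorem \ref{theorem-main} in order to read the Morse index of each of $\mathcal{L}_b$ and $\mathcal{L}_\infty$ as the number of zeros on $(0,\infty)$ of a distinguished solution of the corresponding homogeneous linear equation, and then to show that the two zero counts agree for $b$ large.

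Concretely, after the change of variable $F(r)=r\mathfrak{u}(r)$ and $t=\log r$, both $\mathcal{L}_b$ and $\mathcal{L}_\infty$ transform into Schr\"odinger-type operators whose form domains translate $\mathcal{E}$ and $\mathcal{E}_\infty$ into weighted $H^1$ spaces on $\mathbb{R}$. By Theorem 3.5 of \cite{simon}, the Morse index equals the number of zeros in $(0,\infty)$ of the solution $v^-$ of the homogeneous equation that lies in the form domain at $r=0$ but is not required to decay at $r=\infty$. For finite $b$ the introduction identifies this solution as $v_b^-=r^{-1}\partial_b F_b$; in the limit $b\to\infty$, differentiating the family of singular-type profiles around $F_\infty$ in the inner parameter analogous to $b$ yields a corresponding solution $v_\infty^-$ of $\mathcal{L}_\infty v=0$. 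The linear independence of $v_b^-$ with the "right" solution $v_b^+$ obtained from the $c$-family at infinity (already used in the proof of Theorem \ref{theorem-main}) guarantees that $0$ is not an eigenvalue, so the zero count is genuinely the Morse index.

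Using $\mathfrak{u}_b\to\mathfrak{u}_\infty$ in $\mathcal{E}$ and $\lambda(b)\to\lambda_\infty$ from \cite{Selem2013}, the coefficients of $\mathcal{L}_b$ converge to those of $\mathcal{L}_\infty$ locally uniformly on $(0,\infty)$. Standard ODE continuous dependence then delivers $v_b^-\to v_\infty^-$ together with its derivative on each compact subinterval. Finiteness of the zero set of each $v^-$ follows from the confining $r^2$ potential at $r=\infty$ (which gives exponential decay and no zero past some large radius uniformly in $b$) and, at $r=0$, from the fact that the limiting singular potential $-3(d-3)/r^2$ is Hardy-subcritical precisely when $d>8+2\sqrt{6}=d_\star(1)$, which is ensured by $d\geq 13$. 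Hence all zeros of $v_b^-$ and $v_\infty^-$ are contained in a common compact interval $[R_1,R_2]$ uniformly for $b>b_0$, and continuous dependence identifies the zero counts on this window. Combined with the nondegeneracy ensured by $\lambda'(b)\neq 0$, no zero is created or absorbed at the boundary of $[R_1,R_2]$, and one concludes that the Morse indices coincide.

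The main obstacle I anticipate is the uniform-in-$b$ control of oscillations of $v_b^-$ as $r\to 0$: for finite $b$ the solution is regular at the origin, whereas in the limit $\mathcal{L}_\infty v=0$ carries two distinct real indicial roots at $r=0$ and the sub-dominant branch must be singled out. In Emden--Fowler coordinates one must rule out any new zero accumulating at $t=-\infty$ as the limiting constant potential is approached from below. The non-accumulation relies on the sharp inequality $3(d-3)<(d-2)^2/4$ valid for $d\geq 13$, together with the quantitative rate of $\mathfrak{u}_b\to\mathfrak{u}_\infty$ near $r=0$ from \cite{BIZON2021112358,Selem2013}; this inner-asymptotic analysis is where the transition from the oscillatory to the monotone regime is used most decisively.
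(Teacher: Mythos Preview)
Your overall framework --- Sturm oscillation after Emden--Fowler, reducing the Morse index to a zero count of a distinguished solution of the homogeneous equation --- matches the paper's.  The key divergence is \emph{which} distinguished solution you track.  You choose the ``left'' solution $v_b^- = r^{-1}\partial_b\Psi_b(\log r)$, the one regular at $r=0$; the paper instead counts zeros of the ``right'' solution $r^{-1}\partial_c\Psi_{c(b)}(\log r)$, the one decaying at $r=\infty$.  Both are valid inputs to Sturm's theorem, but the paper's choice is strategically better here: $\partial_c\Psi_{c(b)}$ has a natural nondegenerate limit $\partial_c\Psi_\infty$ (Lemma~\ref{lemma:part_c_Psi_c_est} gives quantitative convergence on $[(a-1)\log b,\infty)$), and the remaining task is to show that $\partial_c\Psi_{c(b)}$ has no zeros on $(-\infty,(a-1)\log b)$, which the paper does by an explicit representation (Lemma~\ref{lemma-2}).

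Your route has a genuine gap at the convergence step.  On any fixed compact $t$-interval, $\partial_b\Psi_b(t)\sim b^{-1}\Theta'(t+\log b)\sim A_0\kappa_+ b^{\kappa_+-1}e^{\kappa_+ t}\to 0$ as $b\to\infty$ (see Corollary~\ref{corollary-1}), so ``standard ODE continuous dependence'' does \emph{not} give $v_b^-\to v_\infty^-$ without a $b$-dependent renormalisation that you never specify.  Relatedly, your description of the large-$r$ behaviour is off: $v_b^-$ is the solution \emph{not} in the form domain at infinity, so it grows like $e^{r^2/2}$ rather than decays; eventual sign-definiteness of the growing branch is what actually rules out zeros there, and showing this \emph{uniformly in $b$} is nontrivial and is not supplied.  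Finally, the ``main obstacle'' you flag --- accumulation of zeros of $v_b^-$ near $r=0$ --- is not the crux for your solution: for finite $b$, $\partial_b\Psi_b(t-\log b)\approx b^{-1}\Theta'(t)$ with $\Theta'>0$, so the inner region carries no zeros at all.  The real difficulty for the left solution is uniform control at the \emph{outer} end and the correct identification of the renormalised limit with the subdominant ($e^{\kappa_+ t}$) branch of $\mathcal{M}_\infty\gamma=0$.  The paper sidesteps all of this by working with $\partial_c\Psi_c$ instead.
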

	
	\begin{remark}
		If the norm convergence of the resolvent for $\mathcal{L}_b$ to the resolvent for $\mathcal{L}_{\infty}$ can be established as $b \to \infty$, this would imply the result of Theorem \ref{theorem-main-infty}. We do not study the norm convergence of resolvents here as our methods are based on analysis of differential equations.
	\end{remark}
	
	\begin{remark}
		The result of Theorem \ref{theorem-main-infty} suggests a simple way to obtain the Morse index for the ground state $\mathfrak{u}_b$ in the monotone case for large $b$ from the Morse index for the limiting singular solution $\mathfrak{u}_{\infty}$. The latter one can be approximated numerically with good accuracy.
	\end{remark}

Figure \ref{fig:Morse_index_Lb_Linf} shows uniquely normalized solutions 
$v(r)$ of $\mathcal{L}_b v = 0$ with $b = 1$ and $\mathcal{L}_{\infty} v = 0$ such that $v(r) \to 0$ as $r \to \infty$. Both solutions diverge as $r \to 0$ 
with different divergence rates. Since there exists only one zero for each solution on $(0,\infty)$, Sturm's Oscillation Theorem (Theorem 3.5 in \cite{simon}) asserts that the Morse index of both $\mathcal{L}_b$ and $\mathcal{L}_{\infty}$ in $L^2_r$ is equal to one.
	
	\begin{figure}[htp!]
	\includegraphics[width=0.7\textwidth]{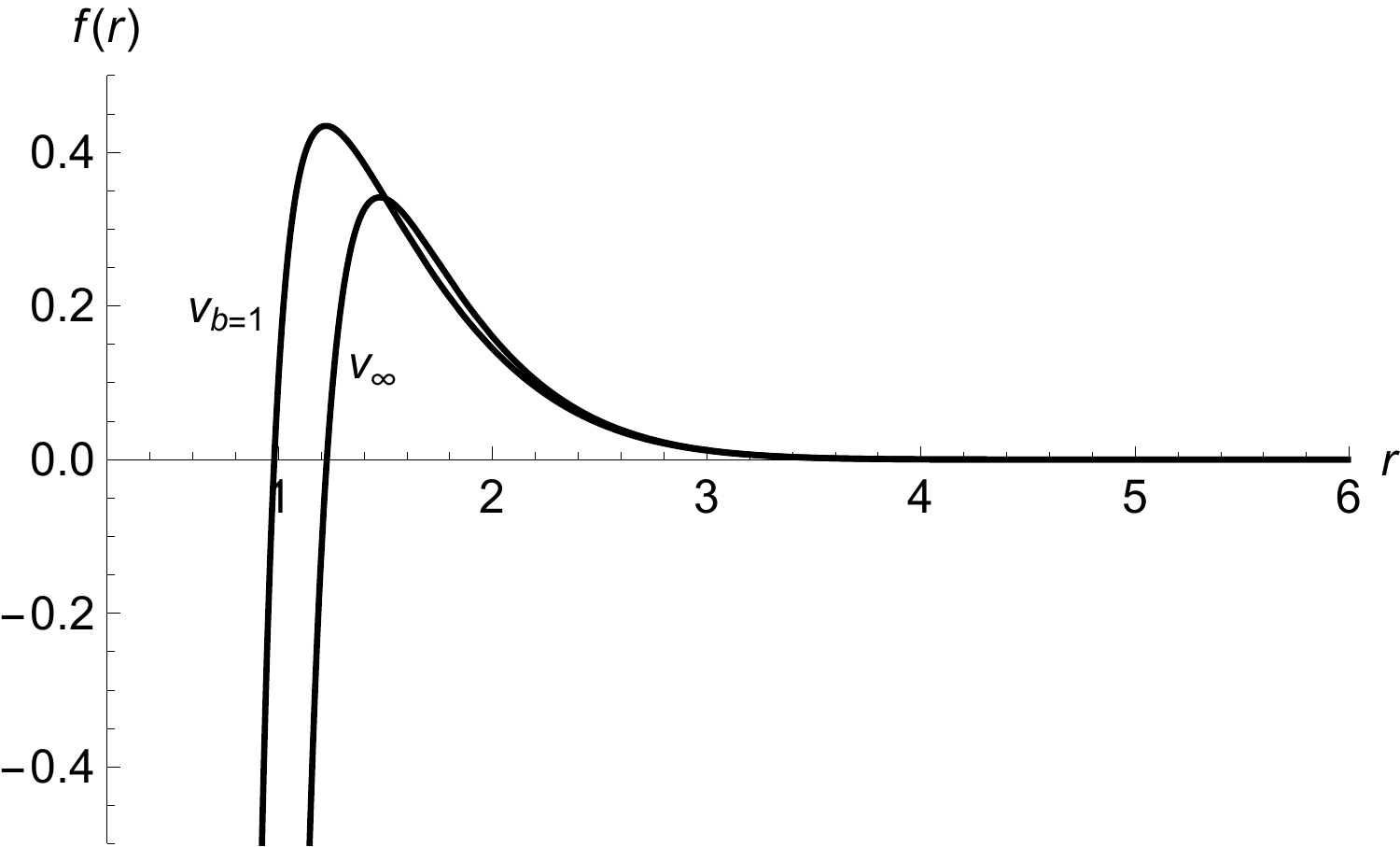}
	\caption{Graph of the uniquely normalized solutions $v(r)$ of $\mathcal{L}_b v = 0$ with $b = 1$ and $\mathcal{L}_\infty v=0$ satisfying $v(r) \to 0$ as $r \to \infty$ for $d = 13$.}
	\label{fig:Morse_index_Lb_Linf}
\end{figure}

	\begin{figure}[htp!]
		\centering
		\includegraphics[width=0.75\textwidth]{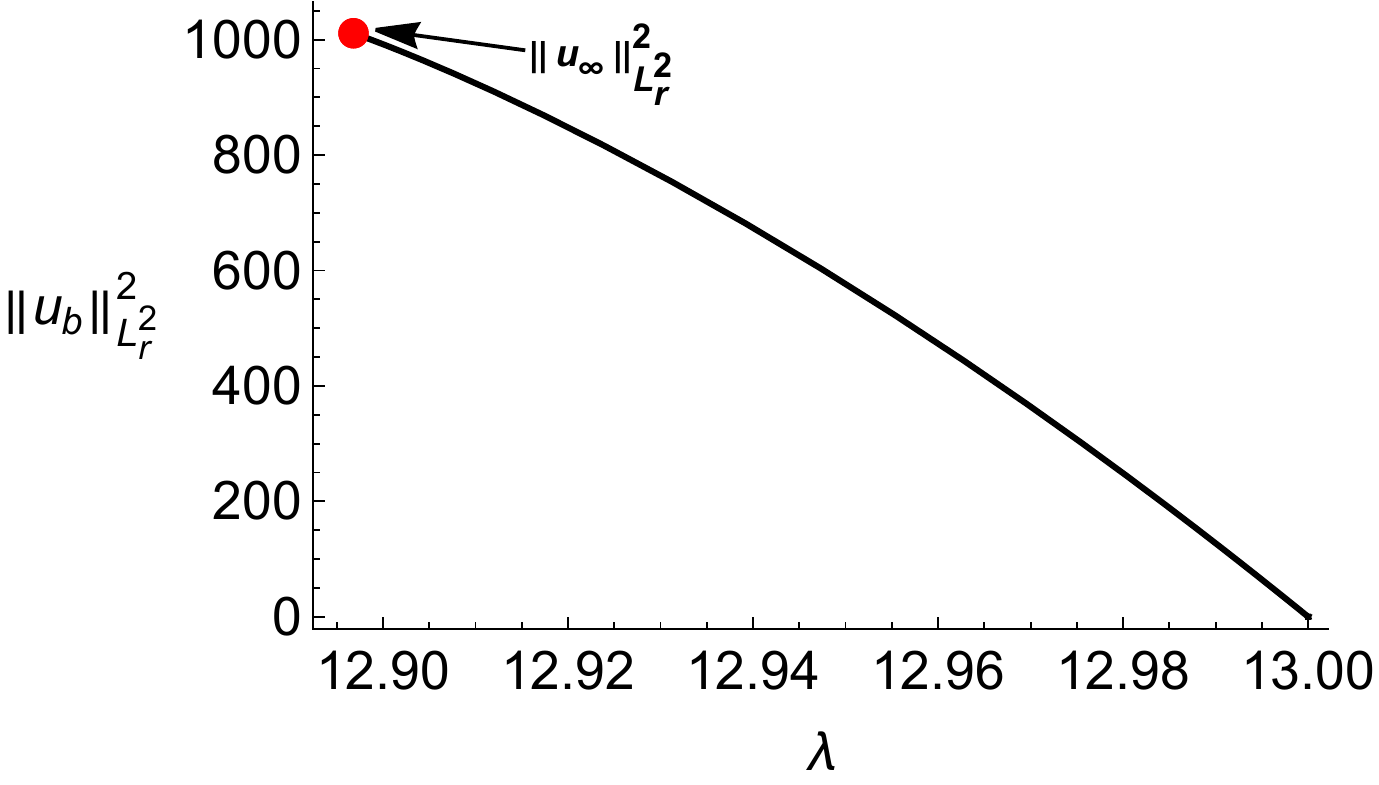}
		\caption{Mass $\| \mathfrak{u}_b \|^2_{L^2_r}$ of the ground state $\mathfrak{u}_b$ for $d=13$ as a function of $\lambda$ together with the mass $\| \mathfrak{u}_{\infty} \|^2_{L^2_r}$ of the limiting singular solution $\mathfrak{u}_\infty$.} 
		\label{fig:mass}
	\end{figure}

By the Vakhitov--Kolokolov stability criterion (Theorem 4.8 in \cite{Pel-book}), if the Morse index is equal to one, the ground state $\mathfrak{u}_b$ is orbitally stable in the time evolution of the Gross--Pitaevskii equation in $\mathcal{E} \cap L^4_r$ for $b \in (b_0,\infty)$ 
if the mapping of $\lambda \mapsto \| \mathfrak{u}_b \|^2_{L^2_r}$ is monotonically decreasing. Figure \ref{fig:mass} shows the dependence of 
the mass $M(\mathfrak{u}_b) = \| \mathfrak{u}_b \|^2_{L^2_r}$ versus $\lambda$ for $\lambda = \lambda(b)$. The red dot depicts the finite value of 
the limiting mass $M(\mathfrak{u}_{\infty}) = \| \mathfrak{u}_{\infty} \|^2_{L^2_r}$. Since the mapping is monotonically decreasing, the Vakhitov--Kolokolov stability criterion asserts that the ground state $\mathfrak{u}_b$ is orbitally stable for $d = 13$ if the time evolution of the Gross--Pitaevskii equation is locally well-posed in $\mathcal{E} \cap L^{4}_r$.

Note that the same conclusion holds for other values of $d$ in the monotone case $d \geq 13$. We have also checked other values of $b$ and 
found no points of bifurcations along the solution family $\lambda(b)$ where $\mathcal{L}_b$ admits zero eigenvalue  
in $L^2_r$. This suggests that the monotone dependence of $\lambda(b)$ with no 
critical points, where $\lambda'(b)$ vanishes, implies no bifurcation points. 
This useful property has not been proven in the literature.
	
	The article is organized as follows. Section \ref{sec-prel} gives 
	the review of preliminary results. Sections \ref{sec-der-b} and \ref{sec-der-c} contain 
	estimates of the derivatives of the two families of solutions 
	described above. The proof of Theorems \ref{theorem-main} and \ref{theorem-main-infty} is described in Section \ref{sec-proofs}.

	\section{Preliminary results}
	\label{sec-prel}
	
	We begin by introducing the Emden-Fowler transformation 
	\begin{equation}
		\label{EF}
		r=e^t, \quad F(r)=\Psi(t), \quad F'(r)=e^{-t}\Psi'(t),
	\end{equation}
	which transforms the differential equation 
	\begin{equation}
	F''(r) + \frac{d-3}{r} F'(r) - \frac{d-3}{r^2} F(r) - r^2 F(r) + \lambda F(r) + \frac{1}{r^2} F(r)^3 = 0,  \quad r > 0
	\end{equation}
	 to the equivalent form
	\begin{equation}
		\label{eq-psi-singular}
		\Psi''(t) + (d-4) \Psi'(t) + (3-d) \Psi(t) + \Psi(t)^3
		= -\lambda e^{2t} \Psi(t) + e^{4t} \Psi(t), \quad t \in \mathbb{R}.
	\end{equation}
	For fixed $d \geq 5$ and $\lambda \in (0,d)$, two one-parameter families of solutions to the  second-order differential equation \eqref{eq-psi-singular} have been constructed in \cite{BIZON2021112358} according to their asymptotic behaviors as $t \to -\infty$ and $t \to +\infty$, respectively.
	
	The first family of solutions to the differential equation (\ref{eq-psi-singular}), denoted as $\left\{\Psi_b\right\}_{b\in \mathbb{R}}$, corresponds to solutions of the initial-value problem \eqref{statGPshoot} after applying the transformation $\Psi_b(t)=e^t f_b(e^t)$. By Lemmas 3.2 and 3.4 in \cite{BIZON2021112358}, 
	$\Psi_b \in C^2(\mathbb{R})$ satisfies the asymptotic behavior
	\begin{equation}
		\label{sol-b-asymptotics}
		\Psi_b(t)=be^t-(\lambda b + b^3)(2d)^{-1}e^{3t}+\mathcal{O}(e^{5t}), \quad \text{as } t\to-\infty,
	\end{equation}
	where the expansion can be differentiated in $t$.
	These solutions depend on $\lambda$ as well, and for $\lambda=\lambda(b)$ and $b > 0$, $\Psi_b(t)$  gives a solution to the boundary-value problem \eqref{statGP}, after the transformation $\mathfrak{u}_b(r) = r^{-1} \Psi_b(\log r)$. For other values of $\lambda$, $\Psi_b(t)$ generally diverges as $t\to +\infty$.
	
	The second family of solutions to the differential equation \eqref{eq-psi-singular}, denoted as $\left\{\Psi_c\right\}_{c\in\mathbb{R}}$, 
	decays to zero as $t \to +\infty$. By Lemmas 3.3 and 3.4 in \cite{BIZON2021112358}, 
	$\Psi_c \in C^2(\mathbb{R})$ satisfies the asymptotic behavior
	\begin{equation}
		\label{sol-c-asymptotics}
		\Psi_c(t)\sim c e^{\frac{\lambda-d+2}{2}t}e^{-\frac{1}{2}e^{2t}}, \quad \text{as } t \to +\infty,
	\end{equation}
	where the sign $\sim$ denotes the asymptotic correspondence which can be differentiated in $t$. Each $\Psi_c(t)$ generally diverges as $t\to-\infty$, except when $\lambda=\lambda(b)$ and $c=c(b)$ for some value of $c(b)$ for which it concides with $\Psi_b(t) = e^t \mathfrak{u}_b(e^t)$:
	\begin{equation}
		\label{bound-state}
		\lambda = \lambda(b) : \quad \Psi_b(t)=\Psi_{c(b)}(t), \quad \mbox{for all} \;\; t\in\mathbb{R}.
	\end{equation}
	
	Each family of solutions is differentiable with respect to parameters $\lambda$ and either $b$ or $c$ due to smoothness of the differential equation (\ref{eq-psi-singular}). Their derivatives decay to zero as $t \to -\infty$ and $t \to +\infty$ respectively, but generally diverge at the other infinities. 
	
	Let us define linearizations of the second-order equation (\ref{eq-psi-singular}) at the two families of solutions: 
	\begin{eqnarray}
		\label{eq:Lb_operator}
		\mathcal{M}_b &:=& \frac{d^2}{dt^2} + (d-4)\frac{d}{dt} +(3-d)+3\Psi_b^2 +\lambda e^{2t} -e^{4t}, \\
		\mathcal{M}_c &:=& \frac{d^2}{dt^2} + (d-4)\frac{d}{dt} + (3-d) + 3\Psi_c^2 + \lambda e^{2t} -e^{4t}.
		\label{eq:Lc_operator}
	\end{eqnarray}
	Then, differentiating the second-order equation \eqref{eq-psi-singular} with respect to $b$ and $c$ at fixed $\lambda$ yields 
	\begin{align}
		\label{der-solutions}
		\mathcal{M}_b \partial_b\Psi_b = 0, \quad \mathcal{M}_c \partial_c \Psi_c = 0,
	\end{align}
	where $\partial_b \Psi_b(t) \to 0$ as $t \to -\infty$ and 
	$\partial_c \Psi_c(t) \to 0$ as $t \to +\infty$.
	
	The first family $\left\{\Psi_b\right\}_{b\in \mathbb{R}}$ is defined 
	in a neighborhood of a heteroclinic orbit $\Theta$ connecting the saddle point $(0,0)$ and the stable point $(\sqrt{d-3},0)$ of the truncated autonomous version of equation \eqref{eq-psi-singular} given by 
	\begin{equation}
		\Theta''(t)+(d-4)\Theta'(t)+(3-d)\Theta(t)+\Theta(t)^3=0.
		\label{eq:theta_nonlinear}
	\end{equation}
	By Lemma 6.1 in \cite{BIZON2021112358}, there exists a heteroclinic orbit between $(0,0)$ and $(\sqrt{d-3},0)$ 
	which is defined uniquely (module to the translation in $t$) by the asymptotic behavior 
	\begin{equation}
		\label{sol-Theta-asymptotics-1}
		\Theta(t) = e^t-(2d)^{-1}e^{3t}+\mathcal{O}(e^{5t}), \quad \text{as } t\to-\infty.
	\end{equation}
	The following proposition presents the main result of Lemmas 6.2, 6.5, and 6.8 in \cite{BIZON2021112358}.
	
	\begin{proposition}
		\label{prop-b-solution}
		Fix $d \geq 5$ and $\lambda \in \mathbb{R}$. For every $T > 0$ and $a \in (0,1)$, there exist $(T,a)$-independent constants $b_0 > 0$, $C_0 > 0$ 
		and $(T,a)$-dependent constants $b_{T,a} > 0$, $C_{T,a} > 0$ such that the unique solution $\Psi_b$ to the differential equation (\ref{eq-psi-singular}) with the asymptotic behavior (\ref{sol-b-asymptotics}) satisfies for every $b \in (b_0,\infty)$
		\begin{equation}
			\label{bound-solution-b}
			\sup_{t \in (-\infty,0]} |\Psi_b(t-\log b) - \Theta(t)| \leq C_0 b^{-2} e^{3t}
		\end{equation}
		and  for every $b \in (b_{T,a},\infty)$
		\begin{equation}
			\label{bound-solution-b-extended}
			\sup_{t \in [0,T + a \log b ]} |\Psi_b(t-\log b) - \Theta(t)| \leq C_{T,a} b^{-2(1-a)}.
		\end{equation}
	\end{proposition}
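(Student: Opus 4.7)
The plan is to rescale the independent variable so that the perturbation becomes small, and then estimate the difference from the autonomous orbit $\Theta$. Let $\tilde\Psi_b(t) := \Psi_b(t - \log b)$; then $\tilde\Psi_b$ satisfies the perturbed autonomous equation
\begin{equation*}
\tilde\Psi_b'' + (d-4)\tilde\Psi_b' + (3-d)\tilde\Psi_b + \tilde\Psi_b^3 = \bigl(-\lambda b^{-2} e^{2t} + b^{-4} e^{4t}\bigr)\tilde\Psi_b,
\end{equation*}
and pushing (\ref{sol-b-asymptotics}) through the shift gives $\tilde\Psi_b(t) = e^t - (1 + \lambda b^{-2})(2d)^{-1} e^{3t} + \mathcal{O}(b^{-4} e^{5t})$ as $t \to -\infty$. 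Subtracting (\ref{eq:theta_nonlinear}) and using (\ref{sol-Theta-asymptotics-1}), the difference $W_b := \tilde\Psi_b - \Theta$ satisfies
\begin{equation*}
\mathcal{L}_\Theta W_b = -3\Theta W_b^2 - W_b^3 + \bigl(-\lambda b^{-2} e^{2t} + b^{-4} e^{4t}\bigr)(\Theta + W_b),
\end{equation*}
with $W_b(t) = \mathcal{O}(b^{-2} e^{3t})$ as $t \to -\infty$, where $\mathcal{L}_\Theta := \partial_t^2 + (d-4)\partial_t + (3-d) + 3\Theta^2$ is the linearization about $\Theta$. The two bounds in Proposition~\ref{prop-b-solution} amount to two fixed-point estimates for $W_b$ on adjacent intervals.

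For (\ref{bound-solution-b}) I would recast the problem on $(-\infty, 0]$ as a contraction in the weighted Banach space $X_{-} := \{W \in C((-\infty, 0]) : \sup_{t \le 0} e^{-3t}|W(t)| < \infty\}$, the natural space consistent with the asymptotic source. Near $t = -\infty$, $\mathcal{L}_\Theta$ approaches $\partial_t^2 + (d-4)\partial_t + (3-d)$, whose characteristic roots are $\mu=1$ and $\mu=3-d$; of these, the $e^t$-mode is $\Theta'$ itself, while $e^{(3-d)t}$ grows at $-\infty$. Requiring $W_b = \mathcal{O}(e^{3t})$ at $-\infty$ forces both free constants in the general homogeneous solution to vanish, and produces a unique bounded Green's function for $\mathcal{L}_\Theta$ on $(-\infty, 0]$ adapted to the $X_-$-norm, using the identity that the characteristic polynomial at $\mu=3$ equals $2d$. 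The linear forcing contributes $\mathcal{O}(b^{-2})$ to $\|W\|_{X_-}$, the quartic one $\mathcal{O}(b^{-4})$, and the nonlinear terms $-3\Theta W^2 - W^3$ are super-linear, so a contraction on a ball of radius $C_0 b^{-2}$ in $X_-$ yields (\ref{bound-solution-b}) with $b_0$ and $C_0$ independent of $T$ and $a$.

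For the extended bound (\ref{bound-solution-b-extended}) on $[0, T + a\log b]$, the forcing $-\lambda b^{-2} e^{2t}\tilde\Psi_b$ grows to size $\mathcal{O}(b^{-2(1-a)})$ at the right endpoint, so the target is $\|W_b\|_\infty = \mathcal{O}(b^{-2(1-a)})$. Starting from Cauchy data at $t = 0$ of size $\mathcal{O}(b^{-2})$ provided by the first bound, I would solve the equation for $W_b$ by variation of parameters on $[0, T + a\log b]$ in the sup norm. Since $\Theta(t) \to \sqrt{d-3}$ as $t \to +\infty$, the frozen linearization there is $\partial_t^2 + (d-4)\partial_t + 2(d-3)$, whose characteristic roots $\mu_\pm = \tfrac{1}{2}\bigl[-(d-4) \pm \sqrt{d^2 - 16d + 40}\bigr]$ both have negative real part for $d \geq 5$, making the linearization exponentially stable at $+\infty$ and giving a Green's function uniformly bounded on the interval. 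Integrating the forcing against this Green's function reproduces the $\mathcal{O}(b^{-2(1-a)})$ bound, and a contraction closes provided $b > b_{T,a}$.

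The main obstacle is the translation degeneracy $\mathcal{L}_\Theta \Theta' = 0$, which must be handled to invert $\mathcal{L}_\Theta$ on a half-line. This is removed by the rigid asymptotic normalization of $\Theta$ in (\ref{sol-Theta-asymptotics-1}) together with the matching of the leading $e^t$-coefficients of $\tilde\Psi_b$ and $\Theta$, which force $W_b$ into the subspace $\{W = \mathcal{O}(e^{3t})\}$ that is automatically transverse to the kernel spanned by $\Theta'$. Once this $b$-independent linear theory for $\mathcal{L}_\Theta$ is in hand, the two contractions above fit together to prove Proposition~\ref{prop-b-solution}.
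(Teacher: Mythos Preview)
Your approach is correct and is precisely the one used for this statement. Note that the paper itself does not prove Proposition~\ref{prop-b-solution}; it merely quotes Lemmas~6.2, 6.5, and 6.8 of \cite{BIZON2021112358}. However, the paper does give a full proof of the analogous result for the derivative (Lemma~\ref{lemma:part_b_Psi_b_est}), and your sketch mirrors that argument exactly: set $\gamma = \tilde\Psi_b - \Theta$ (the paper's $\mathcal{M}_0$ is your $\mathcal{L}_\Theta$), on $(-\infty,0]$ use the fundamental pair $\Theta'(t)\sim e^t$ and $\Xi(t)\sim (2-d)^{-1}e^{(3-d)t}$ together with the weighted rescaling to kill the divergence of $\Xi$, and on $[0,T+a\log b]$ switch to the pair $\gamma_1,\gamma_2$ with $\gamma_j(t)=\mathcal{O}(e^{\kappa_\mp t})$ determined by the stable node at $\sqrt{d-3}$. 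One cosmetic difference: in the paper's version of the second estimate the Lipschitz factor picks up a harmless $(T+a\log b)$ from a crude $L^\infty$ bound on the kernel, whereas you implicitly use the sharper fact that $\int_0^t |G(t,t')|\,dt'$ is bounded uniformly in $t$ thanks to the exponential decay $e^{\operatorname{Re}\kappa_+(t-t')}$. Either estimate suffices, since $(T+a\log b)\,b^{-2(1-a)}\to 0$.
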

	
	The heteroclinic orbit of the truncated equation (\ref{eq:theta_nonlinear}) connects the saddle point $(0,0)$ 
	associated with the characteristic exponents $\kappa_1 =1$ and $\kappa_2=3-d$ 
	and the stable point $(\sqrt{d-3},0)$ associated with the characteristic 
	exponentis $\kappa_+$ and $\kappa_-$ given by 
	\begin{equation}
		\label{kappa-pm}
		\kappa_\pm = -\frac{1}{2}(d-4)\pm \frac{1}{2}\sqrt{d^2-16d+40}.
	\end{equation}
	For $d\geq 13$, the characteristic exponents are real and satisfy 
	$\kappa_-<\kappa_+<0$. We make the following assumption 
	on how the heteroclinic orbit converges to the stable point $(\sqrt{d-3},0)$.
	
	\begin{assumption}
		\label{assumption-1} 
		Assume that there exists $A_0 \neq 0$ such that 
		\begin{equation}
			\label{sol-Theta-asymptotics-2}
			\Theta(t) = \sqrt{d-3} + A_0 e^{\kappa_+t} + \mathcal{O}(e^{\kappa_- t},e^{2\kappa_+ t}) \quad \mbox{\rm as} \quad t \to +\infty.
		\end{equation}
	\end{assumption}
	
	\begin{remark}
		Assumption \ref{assumption-1} implies that $\Theta(t)$ converges to $\sqrt{d-3}$ as $t \to +\infty$ according to the slowest decay rate given by $\kappa_+$. It is not a priori clear why the constant $A_0$ could not be zero in exceptional cases, for which $\Theta(t)$ converges to $\sqrt{d-3}$ as $t \to +\infty$ according to the fastest decay rate given by $\kappa_-$.
	\end{remark}
	
	The second family $\left\{\Psi_c\right\}_{c \in \mathbb{R}}$ is defined 
	in a neighborhood of the special solution $\Psi_{\infty}(t) := e^t \mathfrak{u}_{\infty}(e^t)$ obtained from the limiting singular solution $\mathfrak{u}_{\infty} \in \mathcal{E}\cap L^4_r$. This special solution corresponds to 
	the values of $\lambda=\lambda_\infty$ and $c=c_\infty$ so that 
	\begin{equation}
		\label{lim-singular-solution}
		\lambda = \lambda_{\infty} : \quad \Psi_{\infty}(t) = \Psi_{c_{\infty}}(t), \quad 
		\mbox{\rm for all \;} t \in \mathbb{R}.
	\end{equation}
The solution $\Psi_{\infty}$ satisfies the asymptotic behaviors
	\begin{equation}
		\label{sol-Psi-infty-asymptotics}
		\Psi_\infty(t) = \sqrt{d-3}\left[1-\frac{\lambda_\infty}{4d-10}e^{2t}+ \mathcal{O}(e^{4t})\right], \quad \text{as } t\to-\infty
	\end{equation}
and
	\begin{equation}
\label{sol-c-asymptotics-infty}
\Psi_{\infty}(t) \sim c_{\infty} e^{\frac{\lambda_{\infty}-d+2}{2}t}e^{-\frac{1}{2}e^{2t}}, \quad \text{as } t \to +\infty,
\end{equation}
	The following proposition presents a modification of Lemmas 6.6 and 6.9 in \cite{BIZON2021112358}. Since the statement was not proven
	in \cite{BIZON2021112358}, we give the precise proof of this result 
	in Appendix A.

	\begin{proposition}
		\label{lemma:Psi_c_neg_t}
		Fix $d\geq 13$ and $a\in(0,1)$. There exist constants $b_0>0$, $C_0>0$, and $\epsilon_0>0$, such that for every $\epsilon\in(0,\epsilon_0)$, $b \in (b_0,\infty)$, and $(\lambda,c)\in\mathbb{R}^2$ satisfying
		\begin{equation}
			\label{lambda-c}
			|\lambda-\lambda_{\infty}|+|c-c_\infty|\leq \epsilon b^{\kappa_-(1-a)},
		\end{equation}
		it is true for every $t\in[(a-1)\log b,0]$ that
		\begin{equation}
			|\Psi_c(t)-\Psi_\infty(t)|\leq C_0 \epsilon b^{\kappa_-(1-a)}e^{\kappa_-t}.
			\label{eq:Psi_c_bound_neg_t}
		\end{equation}
	\end{proposition}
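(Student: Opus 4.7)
The plan is to write $w(t) := \Psi_c(t) - \Psi_\infty(t)$ and propagate the smallness of the parameter deviation backwards from $t = 0$ down to $t = (a-1)\log b$. Subtracting \eqref{eq-psi-singular} at $(\lambda, c)$ from the same equation at $(\lambda_\infty, c_\infty)$ yields
\begin{equation*}
\mathcal{M}_\infty w = -(\lambda - \lambda_\infty)e^{2t}\Psi_\infty - (\lambda - \lambda_\infty)e^{2t} w - 3\Psi_\infty w^2 - w^3,
\end{equation*}
where $\mathcal{M}_\infty$ is the operator $\mathcal{M}_c$ from \eqref{eq:Lc_operator} specialized to $\Psi_c = \Psi_\infty$ and $\lambda = \lambda_\infty$. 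Smooth dependence of \eqref{eq-psi-singular} on $(\lambda,c)$, combined with the asymptotics \eqref{sol-c-asymptotics} and \eqref{sol-c-asymptotics-infty} at $t \to +\infty$, provides the initial bound $|w(0)| + |w'(0)| \leq C_1(|\lambda-\lambda_\infty| + |c-c_\infty|) \leq C_1 \epsilon b^{\kappa_-(1-a)}$ after integrating forward from a large reference time and then back to $t=0$.

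The second step is to construct a fundamental system $\phi_+, \phi_-$ for $\mathcal{M}_\infty \phi = 0$ on $(-\infty, 0]$ with asymptotics $\phi_\pm(t) \sim e^{\kappa_\pm t}$ as $t \to -\infty$. Since $\Psi_\infty(t) \to \sqrt{d-3}$ at $-\infty$ with exponentially small correction by \eqref{sol-Psi-infty-asymptotics}, the zero-order coefficient $(3-d) + 3\Psi_\infty^2(t) + \lambda_\infty e^{2t} - e^{4t}$ of $\mathcal{M}_\infty$ tends to $2(d-3)$, and the characteristic equation of the limiting constant-coefficient operator has exactly the roots $\kappa_\pm$ from \eqref{kappa-pm}; Levinson-type asymptotic ODE theory then furnishes $\phi_\pm$, whose Wronskian satisfies $W(t) \propto e^{-(d-4)t}$.

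With the fundamental system at hand I would recast the ODE for $w$ as an integral equation via variation of parameters from $t_0 = 0$ and run a Banach fixed point in the weighted space $X_b := \{ w \in C(I_b) : \|w\|_{X_b} := \sup_{t \in I_b} e^{-\kappa_- t}|w(t)| < \infty\}$, where $I_b := [(a-1)\log b, 0]$. The homogeneous part contributes $O(\epsilon b^{\kappa_-(1-a)})$ in $\|\cdot\|_{X_b}$ because the coefficients fixed by $w(0),w'(0)$ are of that size and $|\phi_\pm(t)| \lesssim e^{\kappa_- t}$ on $I_b$; the linear forcing $-(\lambda-\lambda_\infty)e^{2t}\Psi_\infty$ contributes $O(\epsilon b^{\kappa_-(1-a)})$ because $2 - \kappa_- > 0$ makes the Green's function convolution absolutely convergent at $-\infty$; the quadratic/cubic terms contribute $O(\|w\|_{X_b}^2 + \|w\|_{X_b}^3)$ by the same weighted estimate.

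The main obstacle is uniformity in $b \to \infty$: $I_b$ has length $(1-a)\log b$ and $\phi_-$ grows by the factor $b^{-\kappa_-(1-a)}$ across it. The weighted norm is designed precisely to absorb this growth, so for $\epsilon$ sufficiently small (independent of $b$) the nonlinear map is a contraction on the ball $\{\|w\|_{X_b} \leq C_0 \epsilon b^{\kappa_-(1-a)}\}$. The unique fixed point satisfies \eqref{eq:Psi_c_bound_neg_t} and, by Cauchy-problem uniqueness for \eqref{eq-psi-singular}, equals $\Psi_c - \Psi_\infty$ on $I_b$.
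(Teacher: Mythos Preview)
Your proposal is correct and follows essentially the same route as the paper's proof in Appendix~A: you set $w=\Psi_c-\Psi_\infty$, derive the same equation $\mathcal{M}_\infty w = \mathcal{F}(w)$, choose a fundamental system for $\mathcal{M}_\infty\phi=0$ with the asymptotics $e^{\kappa_\pm t}$ as $t\to-\infty$, convert to an integral equation from $t=0$, and close a Banach fixed-point argument in the weighted norm $\sup_{t\in I_b} e^{-\kappa_- t}|w(t)|$---which is exactly the paper's substitution $\tilde{\Sigma}(t)=e^{-\kappa_- t}\Sigma(t)$ and ball of radius $2C_0\epsilon b^{\kappa_-(1-a)}$. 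The only differences are notational.
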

	
	\begin{remark}
		Note that the divergent behavior of $e^{\kappa_-t}$ for large negative $t$ in \eqref{eq:Psi_c_bound_neg_t} is cancelled by the decay of $b^{\kappa_-(1-a)}$ on any fixed interval $[(a-1)\log b,0]$. Thus, bound \eqref{eq:Psi_c_bound_neg_t} implies 
		\begin{equation}
			\label{uniform-bound}
			\sup_{t\in[(a-1)\log b,0]}|\Psi_c(t)-\Psi_\infty(t)|\leq C_0\epsilon,
		\end{equation}
		for every $(\lambda,c) \in \mathbb{R}^2$ satisfying (\ref{lambda-c}).
	\end{remark}

\begin{remark}
Since $\Psi_c$ is smooth in $\lambda$ and $c$ and has the same decay 
(\ref{sol-c-asymptotics}) as $t \to +\infty$ in comparison with 
(\ref{sol-c-asymptotics-infty}) for $\Psi_{\infty}$, it is true for every $(\lambda,c)$ in a local neighborhood of $(\lambda_{\infty},c_{\infty})$ that 
	\begin{equation}
	\sup_{t \in [0,\infty)}|\Psi_c(t)-\Psi_\infty(t)|\leq C_0 (|\lambda - \lambda_{\infty}| + |c - c_{\infty}|).
	\label{eq:Psi_c_bound_pos_t}
	\end{equation}
\end{remark}
	
	Linearization of the second-order equation (\ref{eq-psi-singular}) at $\Psi_{\infty}$ is given by 
	\begin{equation}
		\mathcal{M}_{\infty} := \frac{d^2}{dt^2} + (d-4)\frac{d}{dt} +(3-d)+3\Psi_{\infty}^2 +\lambda_{\infty} e^{2t} -e^{4t}.
		\label{eq:L_infty_op}
	\end{equation}	
	Differentiating the second-order equation (\ref{eq-psi-singular}) with respect to $c$ at fixed $\lambda$ and then substituting $c = c_{\infty}$ and $\lambda = \lambda_{\infty}$ gives 
	\begin{align}
		\label{der-solution-infty}
		\mathcal{M}_{\infty} \partial_c \Psi_{\infty} = 0,
	\end{align}
	where $\partial_c\Psi_\infty$ is a short notation for  $\partial_c\Psi_c|_{(\lambda,c)=(\lambda_\infty,c_\infty)}$.
	The function  $\partial_c\Psi_\infty(t)$ decays fast as $t \to +\infty$ 
	according to (\ref{sol-c-asymptotics}), but generally diverges as $t \to -\infty$. 
	Since $\Psi_{\infty}(t) \to \sqrt{d-3}$ as $t \to -\infty$, the divergence of $\partial_c\Psi_\infty(t)$ as $t \to -\infty$ is defined by the same 
	two characteristic exponents $\kappa_+$ and $\kappa_-$ given by (\ref{kappa-pm}). We make the following assumption on the divergence of this solution. 
	
	\begin{assumption}
		\label{assumption-2}
		Assume that there exists $L_{\infty} \neq 0$ such that 
		\begin{equation}
			\label{sol-Psi-Infty-der}
			\partial_c\Psi_\infty(t) = L_{\infty} e^{\kappa_- t} + 
			\mathcal{O}(e^{\kappa_+ t},e^{(\kappa_-+2)t}) \quad \mbox{\rm as} \quad t \to -\infty.
		\end{equation}
	\end{assumption}
	
	\begin{remark}
		Assumption \ref{assumption-2} implies that $\partial_c\Psi_\infty(t)$ diverges as $t \to -\infty$ with the fastest growth rate given by $\kappa_-$. Again, it is not a priori clear why the constant $L_{\infty}$ could not be zero in exceptional cases, for which $\partial_c\Psi_\infty(t)$ diverges as $t \to -\infty$ with the slowest growth rate given by $\kappa_+$.
	\end{remark}

\begin{figure}[htp!]
	\centering
	\includegraphics[width=0.72\textwidth]{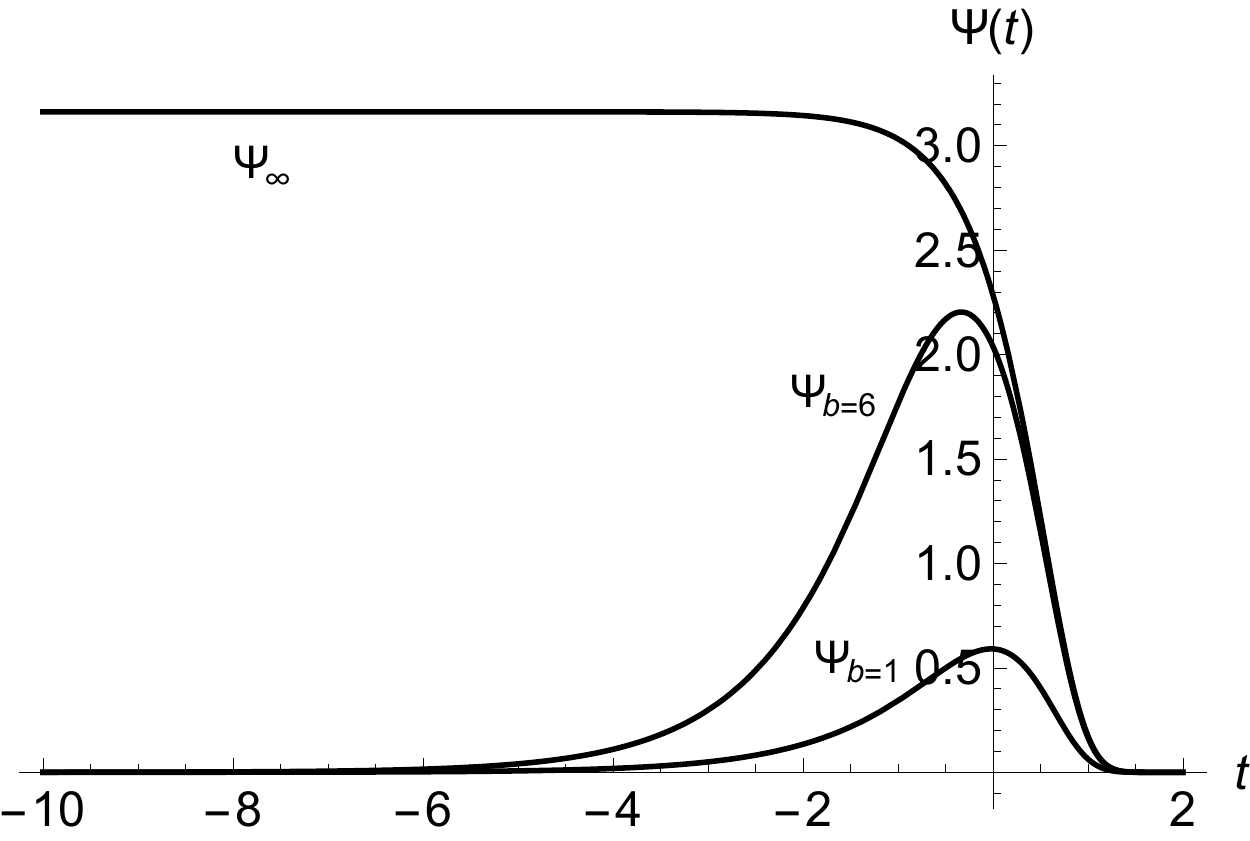}
	\caption{Graph of the  solution $\Psi_b$ for $b = 1$ and $b = 6$ in comparison with the solution $\Psi_\infty$  for $d = 13$.} 
	\label{fig:Psib_Psiinf_b_1_d_13}
\end{figure}

\begin{figure}[htp!]
	\centering
	\includegraphics[width=0.72\textwidth]{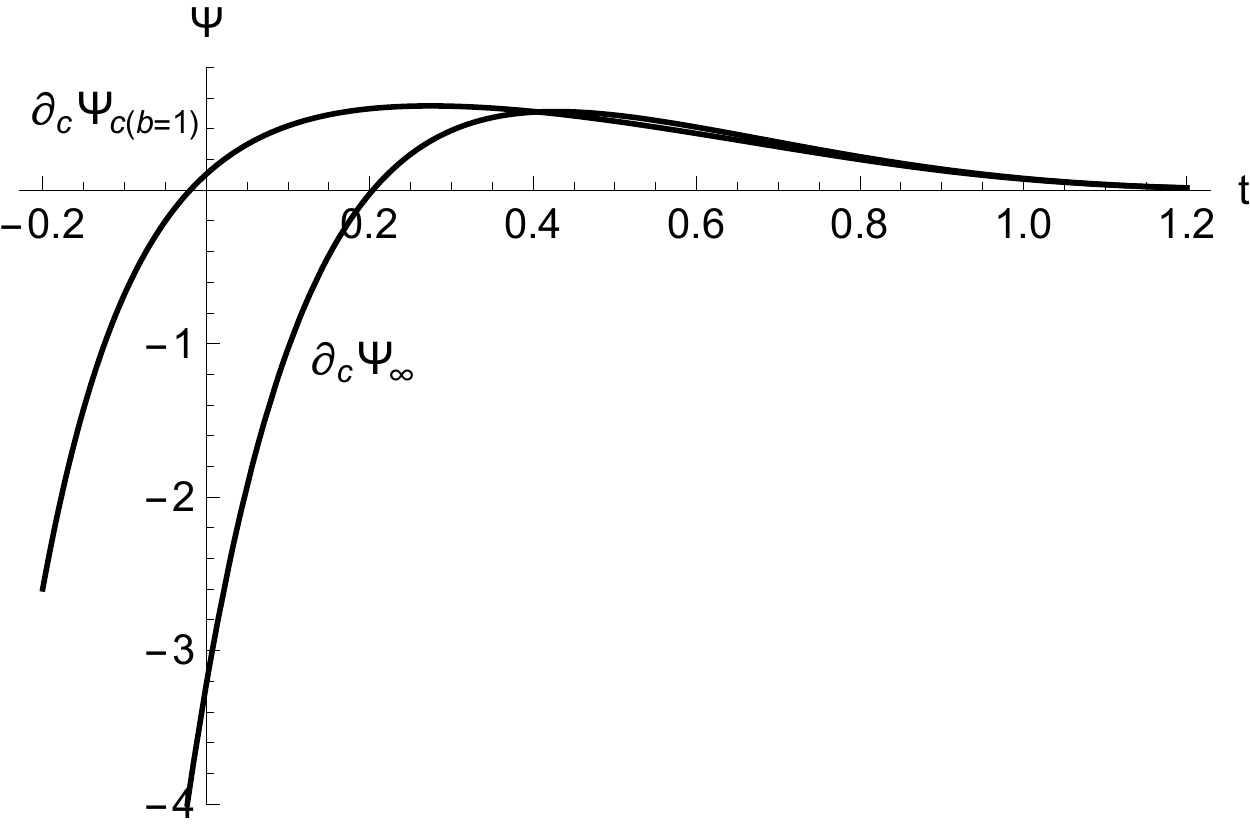}
	\caption{Graph of $\partial_c\Psi_{c(b)}$ with $b = 1$ and $\partial_c\Psi_\infty$ for $d = 13$.}
	\label{fig:Morse_index_Mb_Minf}
\end{figure}	

Figure \ref{fig:Psib_Psiinf_b_1_d_13} shows $\Psi_b(t)$ for two values of $b$ and $\Psi_{\infty}(t)$ for  $d = 13$. After the inverse Emden-Fowler transformation (\ref{EF}) and the transformation $\mathfrak{u}(r) = r^{-1} F(r)$, these functions give $\mathfrak{u}_b(r)$ and $\mathfrak{u}_{\infty}(r)$, as shown on Figure \ref{fig:ub_u_inf_sol}.

Figure \ref{fig:Morse_index_Mb_Minf} shows $\partial_c\Psi_{c(b)}(t)$ with $b = 1$ and $\partial_c\Psi_\infty(t)$ for $d = 13$. These functions are solutions of the homogeneous equaitons $\mathcal{M}_b\partial_c\Psi_{c(b)} = 0$ and $\mathcal{M}_\infty\partial_c\Psi_\infty=0$. After the transformation $v(r) = r^{-1} \partial_c \Psi_{c(b)}(\log r)$, 
these functions give solutions of $\mathcal{L}_b v = 0$ and $\mathcal{L}_{\infty} v = 0$ that decay to zero as $r \to \infty$, as 
shown in Figure \ref{fig:Morse_index_Lb_Linf}. Since $\partial_c\Psi_{c(b)}(t)$ and $\partial_c\Psi_\infty(t)$ have only one zero on $\mathbb{R}$, 
the corresponding functions $v(r)$ have only one zero on $(0,\infty)$, 
so that the Morse index of $\mathcal{L}_b$ and $\mathcal{L}_{\infty}$ is equal to one.

	\section{Derivative of the $b$-family of solutions}
	\label{sec-der-b}
	
	Here we describe the asymptotic behavior of $\partial_b\Psi_b$. The following lemma shows that after translation by $-\log b$, $\partial_b\Psi_b$ converges to $b^{-1} \Theta'$ on the negative $t$-axis. Moreover, the estimate can be extended from $(-\infty,0]$ to $[0,T +a \log b]$ for fixed $T \in \mathbb{R}$ and $a \in (0,1)$ and for sufficiently large $b$ at the expense of slower convergence rate.
	
	\begin{lemma}
		\label{lemma:part_b_Psi_b_est}
		Let $d\geq 13$ and $\lambda\in\mathbb{R}$. For every $T > 0$ and $a \in (0,1)$,  there exist $(T,a)$-independent constants $b_0 > 0$, $C_0 > 0$ 
		and $(T,a)$-dependent constants $b_{T,a} > 0$, $C_{T,a} > 0$ such that 
		$\partial_b \Psi_b$ satisfies for every $b \in (b_0,\infty)$
		\begin{equation}
			\sup_{t\in (-\infty,0]}|\partial_b\Psi_b(t-\log b)-b^{-1}\Theta'(t)|+\sup_{t\in (-\infty,0]}|\partial_b\Psi_b'(t-\log b)-b^{-1}\Theta''(t)|\leq C_0b^{-3}
			\label{eq:part_b_Psi_b_est_neg_t}
		\end{equation}
		and for every $b \in (b_{T,a},\infty)$
		\begin{equation}
			\sup_{t\in\left[0,T+a\log b\right]}|\partial_b\Psi_b(t-\log b)-b^{-1}\Theta'(t)|+\sup_{t\in\left[0,T+a\log b\right]}|\partial_b\Psi_b'(t-\log b)-b^{-1}\Theta''(t)|\leq C_{T,a} b^{-2(1-a)-1}.
			\label{eq:part_b_Psi_b_est_intermediate}
		\end{equation}
	\end{lemma}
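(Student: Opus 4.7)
The plan is to compare the shifted function $V_b(t) := [\partial_b\Psi_b](t-\log b)$ with the known solution $b^{-1}\Theta'(t)$ of the autonomous linearization $L_\Theta := \frac{d^2}{dt^2}+(d-4)\frac{d}{dt}+(3-d)+3\Theta^2$ of (\ref{eq:theta_nonlinear}) at $\Theta$. Setting $\tilde\Psi_b(t) := \Psi_b(t-\log b)$ and differentiating (\ref{eq-psi-singular}) in $b$ before substituting $t \mapsto t-\log b$, one finds that $V_b$ satisfies $L_\Theta V_b + \delta_b(t) V_b = 0$ with
\begin{equation*}
\delta_b(t) := 3\bigl(\tilde\Psi_b(t)^2-\Theta(t)^2\bigr) + \lambda b^{-2} e^{2t} - b^{-4} e^{4t}.
\end{equation*}
Since $L_\Theta \Theta'=0$, the difference $W_b := V_b - b^{-1}\Theta'$ solves the forced linear equation $L_\Theta W_b = -\delta_b W_b - b^{-1}\delta_b\Theta'$. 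My first step would be to differentiate the expansion (\ref{sol-b-asymptotics}) in $b$ (extended one order) and compare with $b^{-1}$ times the derivative of (\ref{sol-Theta-asymptotics-1}); this should show $W_b(t), W_b'(t) = \mathcal{O}(b^{-3}e^{3t})$ as $t\to-\infty$, providing the decay at $-\infty$ needed to anchor a Duhamel inversion from the left endpoint.

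For the estimate on $(-\infty,0]$, I would invoke Proposition~\ref{prop-b-solution} to obtain $|\tilde\Psi_b^2-\Theta^2|\leq Cb^{-2}e^{3t}$ and hence $|\delta_b(t)|\leq Cb^{-2}e^{2t}$, so the inhomogeneous term $b^{-1}\delta_b\Theta'$ is $\mathcal{O}(b^{-3}e^{4t})$ using $|\Theta'(t)|\leq Ce^t$. The autonomous operator $L_\Theta$ admits a fundamental pair consisting of $\Theta'\sim e^t$ and a second solution $\Phi\sim e^{(3-d)t}$ as $t\to-\infty$, with Wronskian $\sim e^{(4-d)t}$. I would then write $W_b$ via variation of parameters anchored at $-\infty$ and perform elementary exponential bookkeeping to show that the resulting Duhamel kernel paired against $e^{4s}$ is uniformly integrable in $s$ on $(-\infty,t]$, contributing $\mathcal{O}(b^{-3})$ from the forcing. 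The self-interaction $\delta_b W_b$ should be absorbed by a Gronwall-type bound in the weighted norm $\sup_{t\leq 0} b^3(|W_b(t)| + |W_b'(t)|)$; differentiating the Duhamel formula then gives the matching bound on $W_b'$, establishing (\ref{eq:part_b_Psi_b_est_neg_t}).

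To extend onto $[0,T+a\log b]$, I would use bound (\ref{bound-solution-b-extended}) to get $|\delta_b(t)|\leq C_{T,a}b^{-2(1-a)}$ uniformly there, since the exponential perturbations $b^{-2}e^{2t}$ and $b^{-4}e^{4t}$ are bounded by $b^{2(a-1)}$ and $b^{4(a-1)}$ and hence dominated for large $b$ by $b^{-2(1-a)}$. Recasting the $W_b$-equation as a first-order system with initial data at $t=0$ given by $|W_b(0)|+|W_b'(0)|\leq C_0 b^{-3}$, Gronwall's inequality on this interval of length $T+a\log b$ with coefficient perturbation of order $b^{-2(1-a)}$ yields an amplification factor $\exp\bigl(C_{T,a}b^{-2(1-a)}(T+a\log b)\bigr) = 1+o(1)$ as $b\to\infty$, while the forcing integrates to $\mathcal{O}(b^{-2(1-a)-1})$, delivering (\ref{eq:part_b_Psi_b_est_intermediate}). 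The main obstacle I anticipate lies in the first step: the second homogeneous solution $\Phi(s)\sim e^{(3-d)s}$ grows very rapidly as $s\to-\infty$, and integrability of the Duhamel kernel relies essentially on the $e^{3s}$-decay of $\tilde\Psi_b^2-\Theta^2$ supplied by (\ref{bound-solution-b}); any weaker rate would prevent closing the estimate.
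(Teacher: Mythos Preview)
Your proposal is correct and follows essentially the same route as the paper: set $\gamma = \partial_b\Psi_b(\cdot-\log b)-b^{-1}\Theta'$, derive the forced equation $\mathcal{M}_0\gamma = f_b(b^{-1}\Theta'+\gamma)$ with $f_b=-\delta_b$, invert on $(-\infty,0]$ by variation of parameters with the pair $(\Theta',\Xi)$ and the bound $|f_b|\leq C_0b^{-2}e^{2t}$, then extend to $[0,T+a\log b]$ using $|f_b|\leq C_{T,a}b^{-2(1-a)}$. The one place where your sketch is less explicit than the paper is the positive-$t$ step: a raw Gronwall on the first-order system would pick up the $O(1)$ coefficients of $L_\Theta$ over an interval of length $\sim a\log b$, giving a polynomial-in-$b$ amplification rather than $1+o(1)$; the paper avoids this by switching to the basis $\gamma_1,\gamma_2\sim e^{\kappa_\pm t}$ decaying as $t\to+\infty$ (here $d\geq 13$ is used so that $\kappa_\pm$ are real and negative), which makes the Duhamel propagator uniformly bounded for $0\leq s\leq t$ and lets only the $\delta_b$-perturbation enter the Gronwall exponent---exactly the mechanism behind the $\exp\bigl(C_{T,a}b^{-2(1-a)}(T+a\log b)\bigr)=1+o(1)$ factor you claim.
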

	
	\begin{proof}
		We begin by introducing $\gamma(t) := \partial_b\Psi_b(t-\log b) - b^{-1}\Theta'(t)$, where $\Psi_b$ is the unique solution to (\ref{eq-psi-singular}) with the asymptotic behavior (\ref{sol-b-asymptotics}) and $\Theta$ is the unique solution 
		to (\ref{eq:theta_nonlinear}) with the asymptotic behavior 
		(\ref{sol-Theta-asymptotics-1}). Since  $\partial_b\Psi_b$ satisfies $\mathcal{M}_b \partial_b \Psi_b = 0$ 
		and $\Theta'$ satisfies $\mathcal{M}_0 \Theta' = 0$, where 
		$\mathcal{M}_b$ is given by (\ref{eq:Lb_operator}) and 
		\begin{equation}
			\mathcal{M}_0 := \frac{d^2}{dt^2} + (d-4) \frac{d}{dt} + (3-d) + 3 \Theta^2,
			\label{Theta-prime}
		\end{equation}
		the difference term $\gamma(t)$ satisfies the following equation:
		\begin{equation}
			\mathcal{M}_0 \gamma = f_b(b^{-1}\Theta'+\gamma),
			\label{eq:pers_part_b_Psi_b}
		\end{equation}
		where
		\begin{align}
			f_b(t) :=3(\Theta(t)^2-\Psi_b(t-\log b)^2)-\lambda b^{-2}e^{2t}+b^{-4}e^{4t}.
			\label{eq:f_b_source}
		\end{align}
		Note that $\mathcal{M}_0 - f_b$ gives $\mathcal{M}_b$ after translation $t \mapsto t + \log b$. \\

		{\em Proof of the bound (\ref{eq:part_b_Psi_b_est_neg_t}).} Two linearly independent solutions of $\mathcal{M}_0 \gamma = 0$ are given by $\Theta'(t)$ and another function $\Xi(t)$, which can be found from the Wronskian relation
		\begin{equation}
			W(\Theta',\Xi)(t)=\Theta'(t)\Xi'(t)-\Theta''(t)\Xi(t) = W_0e^{(4-d)t},
			\label{eq:Wronskian_Teta'_Xi}
		\end{equation}
		for some constant $W_0\neq 0$. We take $W_0 = 1$ in order to normalize $\Xi(t)$ uniquely. Since $\Theta'(t) \to 0$ as $t\to-\infty$ according to the asymptotic expansion
		\begin{equation}
			\Theta'(t) = e^t + \mathcal{O}(e^{3t}), \quad \text{as } t\to-\infty,
			\label{eq:Theta_assymptotics_neg_t}
		\end{equation}
		we have $\Xi(t) \to \infty$ as $t \to -\infty$ according to 
		the asymptotic expansion
		\begin{equation}
			\Xi(t) = (2-d)^{-1}e^{(3-d)t} + \mathcal{O}(e^{(5-d)t}), \quad \text{as } t\to-\infty.
			\label{eq:Xi_assymptotics_neg_t}
		\end{equation}
		
		In order to estimate the supremum-norm of $\gamma(t)$ for $t\in(-\infty,0]$, we first rewrite the differential equation \eqref{eq:pers_part_b_Psi_b} as an integral equation
		\begin{equation}
			\gamma(t) = \int_{-\infty}^t e^{(d-4)t'}[\Theta'(t')\Xi(t)-\Theta'(t)\Xi(t')]f_b(t')[b^{-1}\Theta'(t')+\gamma(t')]dt',
			\label{eq:gamma_int_-infty_to_t}
		\end{equation}
		where the free solution $c_1 \Theta'(t) + c_2 \Xi(t)$ is set to zero from the requirement that $\gamma(t) = \mathcal{O}(e^{3t})$ as $t \to -\infty$. The integral kernel in \eqref{eq:gamma_int_-infty_to_t} becomes bounded if we introduce the transformation $\tilde{\gamma}(t)=e^{-t}\gamma(t)$. The integral equation corresponding to $\tilde{\gamma}$ is
		\begin{equation}
			\tilde{\gamma}(t)=\int_{-\infty}^t K_1(t,t') f_b(t')[b^{-1}e^{-t'}\Theta'(t')+\tilde{\gamma}(t')]dt',
			\label{eq:gamma_tilde_int_-infty_to_t}
		\end{equation}
		where
		\begin{equation}
			K_1(t,t'):=[e^{-t'}\Theta'(t')][e^{(d-3)t}\Xi(t)]e^{(d-2)(t'-t)}-[e^{-t}\Theta'(t)][e^{(d-3)t'}\Xi(t')].
			\label{eq:K1_est}
		\end{equation}
		Since $\Theta'(t) = \mathcal{O}(e^t)$ and $\Xi(t) = \mathcal{O}(e^{(3-d)t})$ as $t \to -\infty$, the integral kernel  $K_1(t,t')$ is a bounded function for all $t,t'\in(-\infty,0]$.
		
		By using bound (\ref{bound-solution-b}) of Proposition \ref{prop-b-solution}, we obtain from (\ref{eq:f_b_source}) that 
		\begin{equation}
			|f_b(t)|\leq C_0 b^{-2}e^{2t}, \quad t\in(-\infty,0],
			\label{eq:f_b_est}
		\end{equation}
		where the constant $C_0$ is independent of $b$ for sufficiently large $b$ and may change from one line to another line. Boundedness of $K_1$ in the integral equation (\ref{eq:gamma_tilde_int_-infty_to_t}) on $(-\infty,0] \times (-\infty,0]$ and the estimate \eqref{eq:f_b_est} allow us to estimate the supremum norm of $\tilde{\gamma}(t)$ on $(-\infty,0]$ as follows
		\begin{equation}
			\|\tilde{\gamma}\|_\infty \leq C_0 b^{-2} (b^{-1} \|e^{-t}\Theta'\|_\infty+ \|\tilde{\gamma}\|_\infty).
		\end{equation}
		Due to smallness of $b^{-2}$ and boundness of $b$-independent 
		$\|e^{-t}\Theta'\|_\infty$, this estimate implies that 
		\begin{equation}
			\sup_{t\in(-\infty,0]}|\tilde{\gamma}(t)| \leq C_0 b^{-3}.
			\label{eq:gamma_tilde_bound_neg_t}
		\end{equation}
		Since $|\gamma(t)|\leq |\tilde{\gamma}(t)|$ for all $t\in(-\infty,0]$,
		we obtain the first part of bound  \eqref{eq:part_b_Psi_b_est_neg_t}. Since $\tilde{\gamma}\in C^1(-\infty,0)$,  we obtain the second part of bound \eqref{eq:part_b_Psi_b_est_neg_t} by differentiting equation \eqref{eq:gamma_tilde_int_-infty_to_t} and using \eqref{eq:gamma_tilde_bound_neg_t}.\\
		
		{\em Proof of the bound \eqref{eq:part_b_Psi_b_est_intermediate}.} 
		In order to estimate $|\gamma(t)|$ for sufficiently large positive $t$, 
		we need to define solutions to $\mathcal{M}_0 \gamma = 0$ from their behavior as $t\to +\infty$. Since $(\sqrt{d-3},0)$ is a stable node of  the nonlinear equation (\ref{eq:theta_nonlinear}), we can pick two linearly independent solutions $\gamma_1(t)$ and $\gamma_2(t)$ 
		from their decaying behavior
		\begin{equation}
			\gamma_1(t)  =\mathcal{O}(e^{\kappa_-t}), \quad  \gamma_2(t)=\mathcal{O}(e^{\kappa_+t}) \quad \mbox{\rm as} \;\; t\to +\infty,
			\label{eq:gamma_1_2_est_pos_t}
		\end{equation}
		where $\kappa_- < \kappa_+<0$ are given by (\ref{kappa-pm}).
		The Liouville's formula yields the Wronskian relation
		\begin{equation}
			W(\gamma_1,\gamma_2)(t)=\gamma_1(t)\gamma_2'(t)-\gamma_1'(t)\gamma_2(t) = W_0e^{(4-d)t},
		\end{equation}
		for some constant $W_0\neq 0$, and by normalizing $\gamma_1(t)$ 
		and $\gamma_2(t)$ we can assume that $W_0=1$. In order to derive supremum-norm estimates for $\gamma(t)$, we once again rewrite differential equation \eqref{eq:pers_part_b_Psi_b} as an integral equation
		\begin{multline}
			\gamma(t) = \gamma(0)[\gamma_1(t)\gamma_2'(0)-\gamma_1'(0)\gamma_2(t)]+\gamma'(0)[\gamma_1(0)\gamma_2(t)-\gamma_1(t)\gamma_2(0)]\\+\int_0^t e^{(d-4)t'}[\gamma_1(t')\gamma_2(t)-\gamma_1(t)\gamma_2(t')]f_b(t')[b^{-1}\Theta'(t')+\gamma(t')]dt',
			\label{eq:gamma_int_0_to_t}
		\end{multline}
		this time for $t\in[0,T+a\log b]$. From bound \eqref{eq:part_b_Psi_b_est_neg_t} we obtain existence of a constant $C_0>0$ and $b_0>0$, such that
		\begin{equation}
			|\gamma(0)|+|\gamma'(0)|\leq C_0b^{-3}, \quad \text{for all } b\geq b_0.
			\label{eq:gamma_0_est}
		\end{equation}		
		Due to the decay of $\gamma_1(t)$ and $\gamma_2(t)$ as $t \to +\infty$, the kernel of the integral equation \eqref{eq:gamma_int_0_to_t} behaves like $e^{\kappa_+(t-t')}$ and $e^{\kappa_-(t-t')}$, and is thus bounded as $t\to +\infty$ since $t' \leq t$. 	By using bound (\ref{bound-solution-b-extended}) of Proposition \ref{prop-b-solution}, we obtain from (\ref{eq:f_b_source}) that
		\begin{equation}
			\sup_{t\in[0,T+a\log b]}|f_b(t)|\leq C_{T,a} b^{-2(1-a)},
			\label{eq:f_b_est_intermediate}
		\end{equation}
		where the $b$-independent constant $C_{T,a}$  may change from one line to another line. Using estimates \eqref{eq:gamma_1_2_est_pos_t},  \eqref{eq:gamma_0_est},  and \eqref{eq:f_b_est_intermediate}, 
		we obtain from the integral equation \eqref{eq:gamma_int_0_to_t} the following bound on the supremum-norm of $\gamma(t)$ on $[0,T+a\log b]$:
		\begin{equation}
			\|\gamma\|_\infty \leq C_{T,a}  \left( b^{-3} + b^{-2(1-a)-1} \| \Theta'\|_{L^1} + (T+a\log b) b^{-2(1-a)} \|\gamma \|_\infty\right).
		\end{equation}
		Due to smallness of $(T+a\log b) b^{-2(1-a)}$ and boundedness of $b$-independent $\| \Theta'\|_{L^1}$, this estimate implies that
		\begin{equation}
			\sup_{t\in[0,T+a\log b]}|\gamma(t)| \leq C_{T,a} b^{-2(1-a)-1}.
			\label{eq:gamma_est_intermediate}
		\end{equation}
		By differentiting equation \eqref{eq:gamma_int_0_to_t} and using \eqref{eq:gamma_est_intermediate}, we obtain a similar bound on $\gamma'(t)$, which together with \eqref{eq:gamma_est_intermediate} gives us bound \eqref{eq:part_b_Psi_b_est_intermediate}.
	\end{proof}
	
	Bound (\ref{eq:part_b_Psi_b_est_intermediate}) of Lemma \ref{lemma:part_b_Psi_b_est} and the expansion (\ref{sol-Theta-asymptotics-2}) imply the following important 
	representation of $\partial_b \Psi_b(t)$ at $t = T + (a-1) \log b$.
	
	\begin{corollary}
		Under Assumption \ref{assumption-1}, there exist some constant $a_0 \in (0,0.5)$  such that for every $a \in (0,a_0)$ and $T > 0$, there exist some $(T,a)$-dependent constants $b_{T,a} > 0$ and $C_{T,a} > 0$ such that for every $b \in (b_{T,a},\infty)$ we have 
		\begin{equation}
			|\partial_b\Psi_b(T+(a-1)\log b)-A_0\kappa_+b^{a\kappa_+-1} e^{\kappa_+T}|\leq C_{T,a} \max\left\{b^{-2(1-a)-1},b^{a\kappa_--1},b^{2a\kappa_+-1}\right\}.
			\label{eq:part_b_Psi_b_bound_evaluated}
		\end{equation}
		\label{corollary-1}
	\end{corollary}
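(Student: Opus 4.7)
The plan is to combine the intermediate bound \eqref{eq:part_b_Psi_b_est_intermediate} of Lemma \ref{lemma:part_b_Psi_b_est} with the asymptotic expansion \eqref{sol-Theta-asymptotics-2} of Assumption \ref{assumption-1}, both evaluated at the matching point $t = T + a\log b$; this is precisely the shift that converts the argument $T + (a-1)\log b$ appearing in \eqref{eq:part_b_Psi_b_bound_evaluated} into the $t$-variable used in Lemma \ref{lemma:part_b_Psi_b_est}.

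First, I would specialize \eqref{eq:part_b_Psi_b_est_intermediate} at $t = T + a\log b$, which for $b$ sufficiently large lies in the admissible range $[0, T + a\log b]$. This yields, for every $b \in (b_{T,a}, \infty)$,
\[
| \partial_b\Psi_b(T + (a-1)\log b) - b^{-1}\Theta'(T + a\log b) | \leq C_{T,a}\, b^{-2(1-a)-1}.
\]
Second, I would differentiate the expansion \eqref{sol-Theta-asymptotics-2} with respect to $t$ to obtain
\[
\Theta'(t) = A_0\kappa_+ e^{\kappa_+ t} + \mathcal{O}(e^{\kappa_- t}) + \mathcal{O}(e^{2\kappa_+ t}) \quad \text{as } t \to +\infty,
\]
substitute $t = T + a\log b$, use $e^{\kappa_\pm a \log b} = b^{a\kappa_\pm}$, and absorb the bounded $T$-dependent factors $e^{\kappa_\pm T}$ into a $T$-dependent constant to deduce
\[
\bigl| b^{-1}\Theta'(T + a\log b) - A_0\kappa_+ b^{a\kappa_+ - 1} e^{\kappa_+ T} \bigr| \leq C_{T,a}\, \max\bigl\{ b^{a\kappa_- - 1},\, b^{2a\kappa_+ - 1} \bigr\}.
\]
The triangle inequality then assembles the two estimates into \eqref{eq:part_b_Psi_b_bound_evaluated}.

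The restriction $a_0 < 1/2$ is dictated by the requirement that the leading term $A_0\kappa_+ b^{a\kappa_+-1} e^{\kappa_+ T}$ be larger than all three error exponents as $b \to \infty$. The sharpest comparison is between $b^{-2(1-a)-1}$ and $b^{a\kappa_+ - 1}$, which forces $a(2-\kappa_+) < 2$; at $d = 13$ (where $\kappa_+ = -4$) this gives $a < 1/3$, while as $d \to \infty$ and $\kappa_+ \to -2$ the threshold approaches $1/2$ from below. The remaining two comparisons, $a\kappa_+ - 1 > a\kappa_- - 1$ and $a\kappa_+ - 1 > 2a\kappa_+ - 1$, are automatic from $\kappa_- < \kappa_+ < 0$.

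The only nontrivial step is the differentiability of the expansion \eqref{sol-Theta-asymptotics-2}. This is standard for an orbit approaching a hyperbolic equilibrium of a smooth autonomous ODE: stable-manifold theory at $(\sqrt{d-3},0)$ expresses $\Theta$ near $t = +\infty$ as a convergent series in $e^{\kappa_+ t}$ and $e^{\kappa_- t}$, which may be differentiated term by term to produce the claimed expansion of $\Theta'$. All remaining steps are algebraic substitution and triangle inequality.
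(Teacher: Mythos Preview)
Your proposal is correct and follows essentially the same route as the paper: specialize the bound \eqref{eq:part_b_Psi_b_est_intermediate} at $t=T+a\log b$, differentiate the expansion \eqref{sol-Theta-asymptotics-2} and evaluate at the same point, then combine by the triangle inequality; the threshold $a_0=2/(2+|\kappa_+|)$ that you derive from $a(2-\kappa_+)<2$ coincides with the paper's formula \eqref{a-0}. Your additional remarks on the termwise differentiability of \eqref{sol-Theta-asymptotics-2} via stable-manifold theory and the explicit checks at $d=13$ and $d\to\infty$ go slightly beyond what the paper writes out, but the core argument is identical.
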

	
	\begin{proof}	
		Bound \eqref{eq:part_b_Psi_b_bound_evaluated} follows from the expansion \eqref{sol-Theta-asymptotics-2} at large positive $t = T + a \log b$, the bound  \eqref{eq:part_b_Psi_b_est_intermediate} at $t = T + (a-1) \log b$, and the triangle inequality if 
		$$
		b^{a\kappa_+-1}\gg b^{-2(1-a)-1}.
		$$ 
		This constraint is satisfied if $a \in (0,a_0)$, where
		\begin{equation}
			\label{a-0}
			a_0:=\frac{2}{2+|\kappa_+|}=\frac{4}{d-\sqrt{d^2-16d+40}}=\frac{d+\sqrt{d^2-16d+40}}{2(2d-5)}.
		\end{equation}
		Note that $a_0 \in (0,0.5)$ for every $d \geq 13$.
	\end{proof}
	
	\begin{remark}
		Lemma \ref{lemma:part_b_Psi_b_est} suggests that both bounds (\ref{bound-solution-b}) and (\ref{bound-solution-b-extended}) of Proposition \ref{prop-b-solution} can be differentiated in $b$ after translation: $t \mapsto t + \log b$. This $C^1$ property would also follow 
		from applications of Banach fixed-point theorem in \cite{BIZON2021112358} due to contraction of integral operators and smoothness of the vector fields. Since the $C^1$ property was not written explicitly in \cite{BIZON2021112358}, we provided the precise proof of Lemma \ref{lemma:part_b_Psi_b_est}.
	\end{remark}
	
	\section{Derivative of the $c$-family of solutions}
	\label{sec-der-c}
	
	Here we describe the asymptotic behavior of $\partial_c\Psi_c$. 
	Since $\Psi_c$ is smooth in $\lambda$ and $c$ and has the same decay as $t \to +\infty$ as the limiting solution $\Psi_{\infty}$ according to (\ref{sol-c-asymptotics}) and (\ref{sol-c-asymptotics-infty}), $\partial_c \Psi_c$ converges to $\partial_c \Psi_{\infty}$ on $[0,\infty)$. To be precise, there exists a constant $C_0 > 0$ such that for every $(\lambda,c)$ in a local neighborhood of $(\lambda_{\infty},c_{\infty})$, we have 
	\begin{equation}
		\label{bound-on-positive}
		\sup_{t \in [0,\infty)} |\partial_c \Psi_c(t) - \partial_c \Psi_{\infty}(t)| + \sup_{t \in [0,\infty)} |\partial_c \Psi_c'(t) - \partial_c \Psi_{\infty}'(t)| \leq C_0 (|\lambda - \lambda_{\infty}| + |c - c_{\infty}|).
	\end{equation}
	The following lemma extends the estimate on the difference $|\partial_c\Psi_c(t)-\partial_c\Psi_\infty(t)|$ from $[0,\infty)$ to 
	$[(a-1)\log b,0]$ for fixed $a \in (0,1)$ and for sufficiently large $b$
	provided that $(\lambda,c)$ are sufficiently close to $(\lambda_\infty,c_\infty)$.

	\begin{lemma}
		\label{lemma:part_c_Psi_c_est}
		Fix $d\geq 13$. For fixed $a\in(0,1)$, there exist $b_0>0$, $C_0>0$, and $\epsilon_0>0$, such that for every $\epsilon\in(0,\epsilon_0)$ and for every $(\lambda,c)\in\mathbb{R}^2$ satisfying
		\begin{equation}
			|\lambda-\lambda_{\infty}|+|c-c_\infty|\leq \epsilon b^{\kappa_-(1-a)},
		\end{equation}
		it is true for every $b\geq b_0$ and every $t\in[(a-1)\log b,0]$ that
		\begin{equation}
			|\partial_c\Psi_c(t)-\partial_c\Psi_\infty(t)|+|\partial_c\Psi_c'(t)-\partial_c\Psi_\infty'(t)| \leq C_0 \epsilon e^{\kappa_- t}.
			\label{eq:r_est_intermediate}
		\end{equation}
	\end{lemma}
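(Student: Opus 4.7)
The plan is to mirror the strategy of Lemma \ref{lemma:part_b_Psi_b_est}, but now centered on $\Psi_\infty$ (rather than $\Theta$) and integrating \emph{backward} from $t=0$ (rather than forward from $t=-\infty$). Set $r(t):=\partial_c\Psi_c(t)-\partial_c\Psi_\infty(t)$. Since $\mathcal{M}_c\partial_c\Psi_c=0$ and $\mathcal{M}_\infty\partial_c\Psi_\infty=0$, subtracting gives the inhomogeneous equation
\begin{equation*}
\mathcal{M}_\infty r = g_c\bigl(\partial_c\Psi_\infty+r\bigr),\qquad g_c(t) := 3\bigl(\Psi_\infty(t)^2-\Psi_c(t)^2\bigr)+(\lambda_\infty-\lambda)e^{2t}.
\end{equation*}
By Proposition \ref{lemma:Psi_c_neg_t} together with the boundedness of $\Psi_\infty+\Psi_c$, one gets $|g_c(t)|\leq C\epsilon b^{\kappa_-(1-a)}e^{\kappa_- t}$ uniformly on $[(a-1)\log b,0]$, where the $e^{2t}$ term is dominated by $e^{\kappa_- t}$ there.

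For the associated homogeneous equation $\mathcal{M}_\infty v=0$, Assumption \ref{assumption-2} supplies one solution, $\partial_c\Psi_\infty(t)\sim L_\infty e^{\kappa_- t}$ as $t\to-\infty$. I would pick a second linearly independent solution $\Phi$ by the Wronskian relation $W(\partial_c\Psi_\infty,\Phi)(t)=e^{(4-d)t}$; since $\kappa_-+\kappa_+=4-d$, this forces $\Phi(t)\sim e^{\kappa_+ t}$ as $t\to-\infty$. Then variation of parameters with initial data taken at $t=0$ gives the integral representation
\begin{equation*}
r(t)=\alpha\,\partial_c\Psi_\infty(t)+\beta\,\Phi(t)+\int_0^t e^{(d-4)t'}\bigl[\partial_c\Psi_\infty(t')\Phi(t)-\partial_c\Psi_\infty(t)\Phi(t')\bigr]g_c(t')\bigl[\partial_c\Psi_\infty(t')+r(t')\bigr]dt',
\end{equation*}
where $\alpha,\beta$ are chosen so that $r(0),r'(0)$ match the values controlled by the a priori bound (\ref{bound-on-positive}), yielding $|\alpha|+|\beta|\leq C\epsilon b^{\kappa_-(1-a)}$.

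To absorb the growth of $\partial_c\Psi_\infty$ as $t\to-\infty$, I would pass to the rescaled unknown $\tilde r(t):=e^{-\kappa_- t}r(t)$. Using the pointwise control $e^{\kappa_- t}\leq b^{-\kappa_-(1-a)}$ valid for $t\geq(a-1)\log b$, the free-solution piece $\alpha\,\partial_c\Psi_\infty(t)+\beta\,\Phi(t)$, once multiplied by $e^{-\kappa_- t}$, contributes at most $C\epsilon$. A direct computation using $\kappa_-+\kappa_+=4-d$ shows that the kernel, together with the estimates $|g_c(t')|\leq C\epsilon b^{\kappa_-(1-a)}e^{\kappa_- t'}$ and $|\partial_c\Psi_\infty(t')|\leq Ce^{\kappa_- t'}$, produces a Volterra-type inequality
\begin{equation*}
|\tilde r(t)|\leq C\epsilon + C\epsilon b^{\kappa_-(1-a)}\int_t^0 \bigl(1+|\tilde r(t')|\bigr)\,dt'
\end{equation*}
on $[(a-1)\log b,0]$. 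Since $b^{\kappa_-(1-a)}\cdot|(a-1)\log b|\to 0$ as $b\to\infty$, Gronwall's inequality gives $|\tilde r(t)|\leq C\epsilon$, i.e.\ $|r(t)|\leq C\epsilon e^{\kappa_- t}$. The derivative bound on $r'$ then follows by differentiating the integral equation and reusing the estimate just obtained.

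The main obstacle is bookkeeping: one must verify that each exponential factor arising from the kernel, the Wronskian normalization, the source $g_c$, and the growth of $\partial_c\Psi_\infty$ combines into the correct scaling $e^{\kappa_- t}$, with the prefactor $b^{\kappa_-(1-a)}$ either absorbed into the smallness hypothesis or cancelled against $e^{-\kappa_- t}$ on the interval $[(a-1)\log b,0]$. The quantitative input from Proposition \ref{lemma:Psi_c_neg_t}, rather than just the uniform estimate (\ref{uniform-bound}), is essential precisely because the source $g_c$ must decay like $e^{\kappa_- t'}$ and not merely be bounded in order to close the Gronwall estimate for $\tilde r$.
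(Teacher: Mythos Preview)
Your approach is essentially the same as the paper's: set up $r=\partial_c\Psi_c-\partial_c\Psi_\infty$, derive $\mathcal{M}_\infty r = g_c(\partial_c\Psi_\infty+r)$, rewrite as an integral equation via variation of parameters with data at $t=0$, rescale by $e^{-\kappa_- t}$, and close using the pointwise bound on $g_c$ from Proposition~\ref{lemma:Psi_c_neg_t}. The paper uses an abstract fundamental system $r_1\sim e^{\kappa_- t}$, $r_2\sim e^{\kappa_+ t}$ rather than your pair $(\partial_c\Psi_\infty,\Phi)$; your choice is equivalent under Assumption~\ref{assumption-2} (which guarantees $L_\infty\neq 0$ and hence linear independence), but it makes that assumption load-bearing for the fundamental system itself, whereas the paper only invokes the weaker consequence $\partial_c\Psi_\infty=\mathcal{O}(e^{\kappa_- t})$ to estimate the source term.

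There is one bookkeeping slip worth flagging. Your displayed Volterra inequality drops the factor $e^{\kappa_- t'}$ coming from $|g_c(t')|\le C\epsilon b^{\kappa_-(1-a)}e^{\kappa_- t'}$. Since $e^{\kappa_- t'}\ge 1$ for $t'\le 0$, discarding it is an \emph{underestimate}, so the inequality as written does not actually follow from the stated bounds. The correct route---which is what the paper does---is to retain $e^{\kappa_- t'}$ in the integrand and use
\[
\int_{(a-1)\log b}^{0} e^{\kappa_- t'}\,dt' \le C\,b^{-\kappa_-(1-a)}
\]
to cancel the prefactor $b^{\kappa_-(1-a)}$, yielding $\|\tilde r\|_\infty \le C\epsilon + C\epsilon\|\tilde r\|_\infty$ directly (no Gronwall needed, and no appeal to $b^{\kappa_-(1-a)}\log b\to 0$). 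Your instinct that the quantitative decay of $g_c$ rather than the uniform bound~\eqref{uniform-bound} is what closes the argument is exactly right; it enters through the integrability of $e^{\kappa_- t'}$ rather than through the length of the interval.
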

	
	\begin{proof}
		Let $r(t) := \partial_c\Psi_c(t) - \partial_c\Psi_\infty(t)$.
		Since $\partial_c \Psi_c$ satisfies $\mathcal{M}_c \partial_c \Psi_c = 0$ and $\partial_c \Psi_{\infty}$ satisfies $\mathcal{M}_{\infty} \partial_c \Psi_{\infty} = 0$, the difference term $r$ satisfies the following equation:
		\begin{equation}
			\label{eq-for-r}
			\mathcal{M}_\infty r= f_c +g_c r,
		\end{equation}
		where 
		\begin{align}
			f_c(t)&:=3(\Psi_\infty(t)^2-\Psi_c(t)^2)\partial_c\Psi_\infty(t)-(\lambda-\lambda_\infty)e^{2t}\partial_c\Psi_\infty(t),\\
			g_c(t)&:=3(\Psi_\infty(t)^2-\Psi_c(t)^2)+(\lambda_\infty-\lambda)e^{2t}.
		\end{align}
		Note that $\mathcal{M}_c = \mathcal{M}_{\infty} - g_c$.
		
		As in Appendix \ref{appendix-a}, we pick two linearly independent solutions $r_1$, $r_2$ to $\mathcal{M}_\infty r = 0$ such that 
		\begin{equation}
			\label{r1-r2-main-text}
			r_1(t) = \mathcal{O}(e^{\kappa_-t}), \quad r_2(t)=\mathcal{O}(e^{\kappa_+t}), \quad \text{as } t\to-\infty,
		\end{equation}
		where $\kappa_- < \kappa_+< 0$ are given by (\ref{kappa-pm}). Using the method of variation of parameters, we rewrite the differential equation (\ref{eq-for-r}) as an integral equation for every $t\in[(a-1)\log b,0]$:
		\begin{multline}
			r(t)=r(0)[r_1(t)r_2'(0)-r_1'(0)r_2(t)]+r'(0)[r_1(0)r_2(t)-r_1(t)r_2(0)] \\
			+\int_t^0 e^{(d-4)t'}[r_1(t)r_2(t')-r_1(t')r_2(t)][f_c(t')+g_c(t')r(t')]dt',
			\label{eq:r_int}
		\end{multline}
		where we have used the normalization of the Wronskian 
		$W(r_1,r_2)(t) = e^{-(d-4)t}$ between the two solutions $r_1$ and $r_2$ 
		as in (\ref{Wronskian-r}).
		
		In order to elliminate the divergent behavior of the kernel in \eqref{eq:r_int} as $t\to-\infty$, we introduce the transformation $\tilde{r}(t)=e^{-\kappa_-t}r(t)$, which results in the following integral equation for $\tilde{r}$:
		\begin{multline}
			\tilde{r}(t)=r(0)e^{-\kappa_-t}[r_1(t)r_2'(0)-r_1'(0)r_2(t)]+r'(0)e^{-\kappa_-t}[r_1(0)r_2(t)-r_1(t)r_2(0)] \\	+\int_t^0 K_2(t,t')[e^{-\kappa_-t'}f_c(t')+g_c(t')\tilde{r}(t')]dt',
			\label{eq:r_int_tilde}
		\end{multline}
		where the kernel $K_2(t,t')$ is the same as in (\ref{ker-2}):
		\begin{equation}
			K_2(t,t'):=e^{\kappa_-(t'-t) - (\kappa_+ + \kappa_-)t'}\left[r_1(t)r_2(t')-r_1(t')r_2(t)\right].
		\end{equation}
		The kernel is bounded for every $-\infty < t \leq t' \leq 0$ as in 
		(\ref{bound-ker-2}). It follows from (\ref{bound-on-positive}) that 
		\begin{equation}
			|r(0)|+|r'(0)| \leq  C_0(|\lambda-\lambda_\infty|+|c-c_\infty|)\leq C_0\epsilon b^{\kappa_-(1-a)},
			\label{eq:r0_est}
		\end{equation}
		where the $(\epsilon,b)$-independent constant $C_0$ can change from one line to another line. It follows from the expansion (\ref{sol-Psi-Infty-der}) in Assumption \ref{assumption-2} that $\partial_c\Psi_\infty(t)=\mathcal{O}(e^{\kappa_-t})$ as $t\to-\infty$. Therefore, we get by using bounds \eqref{eq:Psi_c_bound_neg_t}  and (\ref{uniform-bound}):
		\begin{equation}
			\int_t^0 e^{-\kappa_-t'}|f_c(t')|dt'\leq C_0\left(\epsilon b^{\kappa_-(1-a)}\int_{(a-1)\log b}^0 e^{\kappa_-t'}dt'+|\lambda-\lambda_\infty|\right)\leq C_0\epsilon.
			\label{eq:source_est}
		\end{equation}
		On the other hand, for every $\tilde{r}\in L^\infty((a-1)\log b,0)$, we get by using bounds \eqref{eq:Psi_c_bound_neg_t}  and (\ref{uniform-bound}):
		\begin{equation}
			\int_{(a-1)\log b}^0 |g_c(t')\tilde{r}(t')|dt'\leq C_0\epsilon||\tilde{r}||_\infty.
			\label{eq:g_r_tilde_est}
		\end{equation}
		Putting estimates \eqref{r1-r2-main-text}, \eqref{eq:r0_est}, \eqref{eq:source_est}, and \eqref{eq:g_r_tilde_est} together in the integral equation (\ref{eq:r_int_tilde}) yields
		\begin{equation}
		\label{bound-tilde-r}
			\sup_{t\in[(a-1)\log b,0]}|\tilde{r}(t)|\leq C_0\epsilon,
		\end{equation}
		which is the first part of bound \eqref{eq:r_est_intermediate} after going back to the original variable $r(t)$. The second part of bound (\ref{eq:r_est_intermediate}) is obtained by differentiating (\ref{eq:r_int_tilde}) in $t$ and using bound (\ref{bound-tilde-r}).
	\end{proof}
	
	Bound (\ref{eq:r_est_intermediate}) of Lemma \ref{lemma:part_c_Psi_c_est} 
	and the expansion (\ref{sol-Psi-Infty-der}) imply the following important 
	representation of $\partial_c \Psi_c(t)$ at $t = T + (a-1) \log b$.
	
	\begin{corollary}
		Under Assumption \ref{assumption-2}, for every $a \in (0,1)$ and $T > 0$, there exist some $(T,a)$-dependent constants $b_{T,a} > 0$ and $C_{T,a} > 0$ such that for every $b \in (b_{T,a},\infty)$ we have  
		\begin{align} 
			|\partial_c\Psi_c(T+(a-1)\log b)-L_\infty e^{\kappa_- T}b^{-\kappa_-(1-a)}| \leq C_{T,a} \max\{\epsilon b^{-\kappa_-(1-a)},b^{-\kappa_+(1-a)},b^{-(2+\kappa_-)(1-a)}\}.
			\label{eq:part_c_Psi_infty_der_bound_evaluated}
		\end{align}
		\label{corollary-2}
	\end{corollary}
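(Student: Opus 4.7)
The plan is to mirror the proof of Corollary \ref{corollary-1}: evaluate the known asymptotic expansion of $\partial_c \Psi_\infty$ from Assumption \ref{assumption-2} at the specific point $t^* := T + (a-1)\log b$, control the discrepancy $\partial_c \Psi_c(t^*) - \partial_c \Psi_\infty(t^*)$ using Lemma \ref{lemma:part_c_Psi_c_est}, and then invoke the triangle inequality. The point $t^*$ is the left endpoint of the window $[(a-1)\log b, 0]$ shifted by $T$, and for every $b \geq b_{T,a}$ large enough (say $b > e^{T/(1-a)}$) we have $t^* \in [(a-1)\log b, 0]$ so that Lemma \ref{lemma:part_c_Psi_c_est} applies, while simultaneously $t^* \to -\infty$ as $b \to \infty$, which is where the asymptotic expansion \eqref{sol-Psi-Infty-der} is valid.

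First, I would substitute $t = t^*$ into the expansion \eqref{sol-Psi-Infty-der} from Assumption \ref{assumption-2}. Since $e^{\kappa_\pm t^*} = e^{\kappa_\pm T} b^{-\kappa_\pm(1-a)}$ and $e^{(\kappa_-+2)t^*} = e^{(\kappa_-+2)T} b^{-(2+\kappa_-)(1-a)}$, this yields
\begin{equation*}
\partial_c\Psi_\infty(t^*) = L_\infty e^{\kappa_- T} b^{-\kappa_-(1-a)} + \mathcal{O}\bigl(b^{-\kappa_+(1-a)}\bigr) + \mathcal{O}\bigl(b^{-(2+\kappa_-)(1-a)}\bigr),
\end{equation*}
where the implicit constants depend only on $T$ (through $e^{\kappa_+ T}$ and $e^{(\kappa_-+2)T}$) and are independent of $b$.

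Next, I would apply bound \eqref{eq:r_est_intermediate} of Lemma \ref{lemma:part_c_Psi_c_est} at $t = t^*$ to obtain
\begin{equation*}
|\partial_c\Psi_c(t^*)-\partial_c\Psi_\infty(t^*)| \leq C_0 \epsilon\, e^{\kappa_- T} b^{-\kappa_-(1-a)},
\end{equation*}
under the standing smallness assumption $|\lambda-\lambda_\infty|+|c-c_\infty|\leq \epsilon b^{\kappa_-(1-a)}$ inherited from Lemma \ref{lemma:part_c_Psi_c_est}. Combining the two displays via the triangle inequality and absorbing the $T$-dependent prefactors into a single constant $C_{T,a}$ gives exactly the asserted bound \eqref{eq:part_c_Psi_infty_der_bound_evaluated}, since the right-hand side is controlled by three times the maximum of the three error contributions.

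There is no substantial obstacle here: the nontrivial work has already been packaged into Lemma \ref{lemma:part_c_Psi_c_est} (which propagates smallness of $\partial_c \Psi_c - \partial_c \Psi_\infty$ from $t=0$ down to the left boundary $(a-1)\log b$ against the worst-case growth rate $e^{\kappa_- t}$) and into the expansion \eqref{sol-Psi-Infty-der} provided by Assumption \ref{assumption-2}. The only minor point to check is the range of $b$: one needs $b_{T,a}$ chosen so that $t^* \leq 0$ (i.e., $b \geq e^{T/(1-a)}$) so that both inputs apply simultaneously; this is a harmless lower bound on $b$, absorbed into the definition of $b_{T,a}$. Unlike in Corollary \ref{corollary-1}, no restriction of the form $a < a_0$ is needed since all three error terms in the max are automatically subdominant to the main term $b^{-\kappa_-(1-a)}$ when $\epsilon$ is small and $b$ is large, because $\kappa_- < \kappa_+ < 0$ and $\kappa_- < \kappa_- + 2$.
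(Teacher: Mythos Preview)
Your proposal is correct and follows essentially the same approach as the paper's proof: evaluate the expansion \eqref{sol-Psi-Infty-der} at $t^* = T + (a-1)\log b$, apply the bound \eqref{eq:r_est_intermediate} from Lemma \ref{lemma:part_c_Psi_c_est} at the same point, and combine via the triangle inequality. You have simply made explicit the details (the condition $b \geq e^{T/(1-a)}$ ensuring $t^* \in [(a-1)\log b,0]$, the computation of each error term, and the observation that no restriction $a < a_0$ is needed here) that the paper leaves implicit in its one-sentence proof.
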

	
	\begin{proof}	
		Bound \eqref{eq:part_c_Psi_infty_der_bound_evaluated} follows from 
		the bound (\ref{eq:r_est_intermediate}) at $t = T + (a-1) \log b$ 
		for fixed $T > 0$, $a \in (0,1)$, and sufficiently large $b > 0$ 
		after $\partial_c \Psi_{\infty}(t)$ for large negative $t$ 
		is expressed from the expansion \eqref{sol-Psi-Infty-der}.
	\end{proof}
	
	\begin{remark}
		Lemma \ref{lemma:part_c_Psi_c_est} can be obtained 
		from the $C^1$ property of $\Psi_c$ in $(\lambda,c)$ after some transformations. It follows from the proof of Proposition \ref{lemma:Psi_c_neg_t} in Appendix \ref{appendix-a}	that 
		$$
		|\Psi_c(t) - \Psi_{\infty}(t)| =  \mathcal{O}(\epsilon) \quad t \in [(a-1)\log b,0],
		$$
		and the asymptotic expansion can be differentiated in $\epsilon$. 
		Parameter $\epsilon$ determines the size of the distance $|\lambda - \lambda_{\infty}|$ and $|c - c_{\infty}|$ so that we can write 
	$c-c_{\infty} = \mathcal{O}(\epsilon b^{-\kappa_-(1-a)})$ and differentiate it in $\epsilon$. 
		By taking derivative in $c$ and using the chain rule, 
		this yields 
		$$
		|\partial_c \Psi_c(t) - \partial_c \Psi_{\infty}(t)| =  \mathcal{O}(\epsilon b^{-\kappa_-(1-a)}) \quad t \in [(a-1)\log b,0],
		$$
		which is equivalent to the bound (\ref{eq:r_est_intermediate}). 
		Since taking derivatives in $c$ and using the chain rule are not obvious 
		from the proof of Proposition \ref{lemma:Psi_c_neg_t}, we gave the precise proof of Lemma \ref{lemma:part_c_Psi_c_est}.
	\end{remark}

	\section{Proofs of the main results}
	\label{sec-proofs}
	
	We recall that $\mathcal{M}_b = \mathcal{M}_{c(b)}$ for $\lambda = \lambda(b)$ since $\Psi_b(t) = \Psi_{c(b)}(t)$ for every $t \in \mathbb{R}$. Hence, both $\partial_b \Psi$ and 
	$\partial_c \Psi_{c(b)}$ are solutions of the same homogeneous 
	equation $\mathcal{M}_b \gamma = 0$ for $\lambda = \lambda(b)$. The following lemma shows that these two solutions are linearly independent 
	for sufficiently large values of $b$.
	
	\begin{lemma}
		Fix $d\geq 13$. Under Assumptions \ref{assumption-1} and \ref{assumption-2}, for every $a \in (0,a_0)$ with $a_0$ given by (\ref{a-0}), there exists $b_0>0$ such that for 
		every $b \in (b_0,\infty)$, there exists no $C \in \mathbb{R}$ 
		such that 
		\begin{equation}
			\label{eq:proportionality_part_c_b_Psi}
			C \partial_c\Psi_{c(b)}(t) = \partial_b\Psi_b(t), \quad \text{for all } t\in\mathbb{R}.
		\end{equation}
		\label{lemma-1}
	\end{lemma}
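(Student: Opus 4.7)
The plan is to argue by contradiction using the precise leading-order asymptotics collected in Corollaries~\ref{corollary-1} and~\ref{corollary-2}. Suppose that $\partial_b\Psi_b(t) = C\,\partial_c\Psi_{c(b)}(t)$ for some $C \in \mathbb{R}$ and every $t \in \mathbb{R}$. I would evaluate this identity at points of the form $t = T + (a-1)\log b$ with $a \in (0,a_0)$ and $T > 0$, where $a_0$ is as in~\eqref{a-0}. Under Assumption~\ref{assumption-1}, Corollary~\ref{corollary-1} provides
\[
\partial_b\Psi_b(T + (a-1)\log b) = A_0\kappa_+\, e^{\kappa_+T}\, b^{a\kappa_+-1}\,\bigl(1+o(1)\bigr),
\]
and under Assumption~\ref{assumption-2}, Corollary~\ref{corollary-2} (applied at $(\lambda,c) = (\lambda(b),c(b))$) provides
\[
\partial_c\Psi_{c(b)}(T + (a-1)\log b) = L_\infty\, e^{\kappa_-T}\, b^{-\kappa_-(1-a)}\,\bigl(1+o(1)\bigr),
\]
both as $b \to \infty$.

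Assuming the asserted proportionality, taking the ratio at $t = T + (a-1)\log b$ forces
\[
C = \frac{A_0\kappa_+}{L_\infty}\, e^{(\kappa_+-\kappa_-)T}\, b^{a\kappa_+-1+\kappa_-(1-a)}\,\bigl(1+o(1)\bigr).
\]
Since $C$ is a single constant independent of $T$, I would pick two values $T_1 \neq T_2$ and divide the two resulting expressions; the $b$-dependent prefactor cancels and what remains is $e^{(\kappa_+-\kappa_-)(T_1-T_2)} = 1 + o(1)$ as $b \to \infty$. Because $\kappa_+ \neq \kappa_-$ for $d \geq 13$ by~\eqref{kappa-pm}, the left-hand side is a fixed constant different from $1$, which contradicts the identity for all sufficiently large $b$.

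The main obstacle will be the bookkeeping of errors and the verification of hypotheses. First, I need to check that for $a \in (0,a_0)$ the leading term $b^{a\kappa_+-1}$ in~\eqref{eq:part_b_Psi_b_bound_evaluated} strictly dominates each entry of the maximum on the right (this is precisely what motivates the choice of $a_0$), and similarly that, after fixing $\epsilon$ small, the leading $L_\infty b^{-\kappa_-(1-a)}$ in~\eqref{eq:part_c_Psi_infty_der_bound_evaluated} dominates the $\epsilon b^{-\kappa_-(1-a)}$ error together with the other two entries of the maximum. Second, Corollary~\ref{corollary-2} requires the a priori bound $|\lambda(b)-\lambda_\infty|+|c(b)-c_\infty|\leq \epsilon\, b^{\kappa_-(1-a)}$ at the bound-state parameters, which must be established separately, presumably by an intermediate-scale matching argument that combines the estimate~\eqref{bound-solution-b-extended} for $\Psi_b$ near the stable node with the estimate~\eqref{uniform-bound} for $\Psi_c - \Psi_\infty$, using the identity~\eqref{bound-state}.
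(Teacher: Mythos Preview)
Your proposal is correct and takes essentially the same approach as the paper's proof: both argue by contradiction, evaluate the asserted proportionality at $t=T+(a-1)\log b$, invoke Corollaries~\ref{corollary-1} and~\ref{corollary-2} for the leading-order asymptotics on each side, and derive a contradiction from the $T$-dependence of the resulting coefficient via the factor $e^{(\kappa_+-\kappa_-)T}$ with $\kappa_+\neq\kappa_-$. Your flagged concern about the a~priori rate $|\lambda(b)-\lambda_\infty|+|c(b)-c_\infty|\leq\epsilon\,b^{\kappa_-(1-a)}$ is legitimate bookkeeping that the paper does not spell out in this proof, relying instead on the monotone-convergence analysis of~\cite{BIZON2021112358}.
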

	
	\begin{proof}
		In order to get a contradiction, suppose that relation (\ref{eq:proportionality_part_c_b_Psi}) holds for some constant $C\in\mathbb{R}$. The results of Corollaries \ref{corollary-1} and \ref{corollary-2} apply for $t = T + (a-1) \log b$ for 
		fixed $a \in (0,a_0)$, $T > 0$, and sufficiently large $b$. Substituting bounds \eqref{eq:part_b_Psi_b_bound_evaluated} and \eqref{eq:part_c_Psi_infty_der_bound_evaluated} 
		into \eqref{eq:proportionality_part_c_b_Psi} yields
		\begin{multline}
			C L_\infty e^{\kappa_-T}b^{-\kappa_-(1-a)} 
			\left[ 1 + \mathcal{O}(\epsilon,b^{-(\kappa_+ - \kappa_-) (1-a)},b^{-2 (1-a)}) \right] \\
			= A_0\kappa_+b^{a\kappa_+-1}e^{\kappa_+ T}  \left[ 1 +  \mathcal{O}(b^{-2(1-a)-a \kappa_+},b^{-a(\kappa_+ -\kappa_-)},b^{a\kappa_+}) \right],
			\label{relation}
		\end{multline}
		where we recall that $2(1-a) + a \kappa_+> 0$ if $a \in (0,a_0)$, where $a_0$ is given by (\ref{a-0}) in Corollary \ref{corollary-1}. 
		Since $A_0 \neq 0$ and $L_{\infty} \neq 0$ by Assumptions \ref{assumption-1} 
		and \ref{assumption-2}, we obtain from (\ref{relation}) that 
		\begin{multline}
			C \left[ 1 + \mathcal{O}(\epsilon,b^{-(\kappa_+ - \kappa_-) (1-a)},b^{-2 (1-a)}) \right]  \\
			= L_{\infty}^{-1} A_0 \kappa_+ b^{a(\kappa_+-\kappa_-)+\kappa_- -1} e^{(\kappa_+ -\kappa_-) T} \left[ 1 +  \mathcal{O}(b^{-2(1-a)-a \kappa_+},b^{-a(\kappa_+ -\kappa_-)},b^{a\kappa_+}) \right].
			\label{relation-2}
		\end{multline}
		Since the remainder terms on both sides of (\ref{relation-2}) are smaller 
		than the leading-order terms and $\kappa_+\neq \kappa_-$, this gives a $T$-dependent coefficient $C$, 
		which is a contradiction with the relation (\ref{eq:proportionality_part_c_b_Psi}) for all $t \in \mathbb{R}$ and hence for all $T > 0$. 
	\end{proof}
	
	From Lemma \ref{lemma-1}, we can now prove Theorem \ref{theorem-main} which 
	states that the Morse index of $\mathcal{L}_b : \mathcal{E}\mapsto \mathcal{E}^*$ is finite and is independent of $b$ for every $b \in (b_0,\infty)$.\\
	
	\begin{proof1}{\em of Theorem \ref{theorem-main}}
		For every $b \in (0,\infty)$, the potential $-3 \mathfrak{u}^2_b(r)$ in $\mathcal{L}_b$ is bounded from below on $[0,\infty)$. The Schr\"{o}dinger operator $-\Delta_r + r^2 : \mathcal{E}\mapsto \mathcal{E}^*$ is strictly positive with a purely discrete spectrum. Since $\mathcal{L}_b = -\Delta_r + r^2 - \lambda(b) - 3 \mathfrak{u}_b^2(r)$ is bounded from below, the number of negative eigenvalues (the Morse index)
		of $\mathcal{L}_b : \mathcal{E} \mapsto \mathcal{E}^*$ is finite 
		by Theorem 10.7 in \cite{Sigal}.
		
		It remains to show that the Morse index is independent of $b$ for every $b \in (b_0,\infty)$. Let us recall the Emden--Fowler transformation (\ref{EF}), which relates 
		solutions of $\mathcal{L}_b v = 0$ with solutions of $\mathcal{M}_b \gamma = 0$ by $v(r) = r^{-1} \gamma(\log r)$.
		The spectrum of $\mathcal{L}_b : \mathcal{E}\mapsto \mathcal{E}^*$ includes the zero eigenvalue if and only if there exists $v \in \mathcal{E}$ satisfying $\mathcal{L}_b v = 0$. This is impossible due to Lemma \ref{lemma-1} according to the following arguments. 
		
		As $t\to-\infty$, there exist two linearly independent solutions to $\mathcal{M}_b \gamma = 0$ and the decaying solution is 
		$$
		\partial_b \Psi_b(t) = e^t + \mathcal{O}(e^{3t}), \quad \mbox{\rm as \;\;} t \to -\infty.
		$$
		The other solution is growing as $e^{(3-d)t}$ which corresponds to $v(r) \sim r^{2-d}$ so that 
		$$
		r^{d-1} |v(r)|^2 \sim r^{3-d}
		$$ 
		is not integrable near $r = 0$ for $d \geq 4$. Hence, the corresponding $v(r)$ is not in $L^2_r$ and if there exists nonzero $v \in \mathcal{E}$ satisfying $\mathcal{L}_b v = 0$, then there exists a constant $C_- \neq 0$ such that 
		$$
		v(r) = C_- r^{-1} \partial_b \Psi_b(\log r).
		$$
		As $t \to +\infty$, there exist two linearly independent solutions to $\mathcal{M}_b \gamma = 0$ and the decaying solution is 
		$$
		\partial_c \Psi_{c(b)}(t) \sim e^{\frac{\lambda(b) - d + 2}{2}} e^{-\frac{1}{2} e^{2t}}, \quad \mbox{\rm as \;\;} t \to \infty.
		$$
		The other solution is growing as $e^{\frac{1}{2} e^{2t}}$, which corresponds to $v(r) \sim e^{\frac{1}{2} r^2}$, clearly not in $L^2_r$. If there exists nonzero $v \in \mathcal{E}$ satisfying $\mathcal{L}_b v = 0$, then there exists a constant $C_+ \neq 0$ such that 
		$$
		v(r) = C_+ r^{-1} \partial_c \Psi_{c(b)}(\log r).
		$$
		Since $C_-,C_+ \neq 0$, if there exists nonzero $v \in \mathcal{E}$,
then $\partial_b \Psi_b$ and $\partial_c \Psi_{c(b)}$ are linearly dependent, which results in a contradiction with  Lemma \ref{lemma-1} for every $b \in (b_0,\infty)$. Hence $0 \notin \sigma(\mathcal{L}_b)$ for every $b \in (b_0,\infty)$ so that the Morse index of $\mathcal{L}_b : \mathcal{E}\mapsto \mathcal{E}^*$ is independent of $b$ for every $b \in (b_0,\infty)$.
	\end{proof1}

\vspace{0.25cm}
	
	By Lemma \ref{lemma:part_c_Psi_c_est}, $\partial_c \Psi_c(t)$ converges to 
	$\partial_c \Psi_{\infty}$ on $[(a-1)\log b,\infty)$ as $(\lambda,c) \to (\lambda_{\infty},c_{\infty})$. Each zero of either $\partial_c \Psi_c$ or $\partial_c \Psi_{\infty}$ is simple since they are solutions of the second-order linear homogeneous equations $\mathcal{M}_c \partial_c \Psi_c = 0$ and $\mathcal{M}_{\infty} \partial_c \Psi_{\infty} = 0$. Consequently, 
	the number of nodal domains of $\partial_c \Psi_c$ in $[(a-1)\log b,\infty)$ 
	coincides with that of $\partial_c \Psi_{\infty}$ in $[(a-1)\log b,\infty)$. 
	
	The following lemma shows that  $\partial_c \Psi_{c(b)}$ does not have additional nodal domains in the interval $(-\infty,(a-1) \log b)$
	for sufficiently large $b$.

	\begin{lemma}
		Fix $d\geq 13$. Under Assumption \ref{assumption-2}, for every $a \in (0,1)$, there exists $b_0>0$ such that for every $b \in (b_0,\infty)$, there exists $C_0 > 0$ such that 
		\begin{equation}
			\label{non-vanish}
			|\partial_c \Psi_{c(b)}(t)| \geq C_0, \quad t \in (-\infty,(a-1)\log b).
		\end{equation} 
		\label{lemma-2}
	\end{lemma}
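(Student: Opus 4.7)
The strategy is to show that $\partial_c\Psi_{c(b)}$ is sign-definite on $(-\infty, (a-1)\log b]$ via a Sturm-type argument based on the positivity of $\partial_b\Psi_b$; the positive lower bound $C_0$ will then follow from continuity together with $|\partial_c\Psi_{c(b)}(t)| \to \infty$ as $t \to -\infty$.

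First, I would establish that $\partial_b\Psi_b(t) > 0$ throughout $(-\infty, (a-1)\log b]$ for sufficiently large $b$. On $[-\log b, (a-1)\log b]$ this follows from Lemma~\ref{lemma:part_b_Psi_b_est}, which gives $\partial_b\Psi_b(t) = b^{-1}\Theta'(t+\log b) + O(b^{-2(1-a)-1})$, together with $\Theta' > 0$ on $\mathbb{R}$ (a consequence of $\Theta$ being strictly monotone from $0$ to $\sqrt{d-3}$); on $(-\infty, -\log b]$ positivity follows from the same lemma and from the direct asymptotic $\partial_b\Psi_b(t) = e^t + O(b^2 e^{3t})$ as $t \to -\infty$. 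Define then $R(t) := \partial_c\Psi_{c(b)}(t)/\partial_b\Psi_b(t)$. By Lemma~\ref{lemma-1}, $\partial_b\Psi_b$ and $\partial_c\Psi_{c(b)}$ are linearly independent, so the Wronskian $W(t) = \partial_b\Psi_b \partial_c\Psi_{c(b)}' - \partial_b\Psi_b' \partial_c\Psi_{c(b)} = W_0 e^{-(d-4)t}$ satisfies $W_0 \neq 0$. Hence $R'(t) = W_0 e^{-(d-4)t}/\partial_b\Psi_b(t)^2$ has fixed sign, and $R$ is strictly monotone on the interval.

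The next step is to determine the signs of the two endpoint limits of $R$ and verify they agree. By Corollary~\ref{corollary-1}, Corollary~\ref{corollary-2}, and the derivative bounds built into Lemmas~\ref{lemma:part_b_Psi_b_est} and~\ref{lemma:part_c_Psi_c_est}, I evaluate at $t = (a-1)\log b$:
\[
R((a-1)\log b) \approx \frac{L_\infty}{A_0\kappa_+}\, b^{1-\kappa_- - a(\kappa_+-\kappa_-)}, \qquad W_0 \approx A_0 L_\infty \kappa_+(\kappa_- - \kappa_+)\, b^{\kappa_+-1},
\]
with $1-\kappa_- - a(\kappa_+-\kappa_-) > 0$ for $a \in (0,1)$ since $1-\kappa_+ > 0$. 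Using $A_0 < 0$, $\kappa_- < \kappa_+ < 0$, and $d > 2$, one obtains $\operatorname{sgn}(R((a-1)\log b)) = \operatorname{sgn}(L_\infty)$ and $\operatorname{sgn}(W_0) = -\operatorname{sgn}(L_\infty)$. For the limit at $-\infty$, write $\partial_c\Psi_{c(b)} = \alpha_b \partial_b\Psi_b + \beta_b \phi_2$ with $\phi_2(t) \sim e^{(3-d)t}$ the second fundamental mode of $\mathcal{M}_b\gamma = 0$ at $-\infty$; the identity $W(\partial_b\Psi_b, \phi_2)(t) = (2-d)e^{-(d-4)t}$ yields $\beta_b = W_0/(2-d)$, so $\operatorname{sgn}(\beta_b) = \operatorname{sgn}(L_\infty)$. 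Since $\partial_c\Psi_{c(b)}(t) \sim \beta_b e^{(3-d)t}$ dominates $\partial_b\Psi_b(t) \sim e^t$ as $t \to -\infty$, we conclude $R(t) \sim \beta_b e^{(2-d)t} \to \operatorname{sgn}(L_\infty) \cdot \infty$.

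Combining the above, $R$ is strictly monotonic on $(-\infty,(a-1)\log b]$ with both endpoint limits of the same nonzero sign $\operatorname{sgn}(L_\infty)$, which rules out any zero of $R$, so $\partial_c\Psi_{c(b)} = R \cdot \partial_b\Psi_b$ is sign-definite on the interval. Continuity on compact subintervals combined with $|\partial_c\Psi_{c(b)}(t)| \to \infty$ as $t \to -\infty$ then yields a positive lower bound $C_0$ uniform in $t$. The main obstacle is the Wronskian sign bookkeeping tying $W_0$ to $L_\infty$: the monotonicity direction of $R$ (determined by $\operatorname{sgn}(W_0)$) must be consistent with the signs of its two endpoint limits, and a sign error would allow a zero crossing that would break the conclusion.
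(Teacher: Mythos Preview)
Your approach differs from the paper's and imports hypotheses the lemma does not assume. The paper writes $\partial_c\Psi_{c(b)}$ directly as a linear combination of the two fundamental solutions $\partial_b\Psi_b$ and $\Phi$ of $\mathcal{M}_b\gamma=0$ (with $\Phi(t)\sim(2-d)^{-1}e^{(3-d)t}$ as $t\to-\infty$), the coefficients being fixed by the data $\partial_c\Psi_{c(b)}(t_0)$, $\partial_c\Psi_{c(b)}'(t_0)$ at $t_0=(a-1)\log b$. Inserting the leading values $L_\infty e^{\kappa_-t_0}$ and $L_\infty\kappa_- e^{\kappa_-t_0}$ (from Assumption~\ref{assumption-2} and Lemma~\ref{lemma:part_c_Psi_c_est}) together with the $t\to-\infty$ asymptotics of $\partial_b\Psi_b$ and $\Phi$ gives, for $t<t_0$,
\[
\partial_c\Psi_{c(b)}(t)\approx(d-2)^{-1}L_\infty e^{\kappa_-t_0}\bigl[(\kappa_-+d-3)\,e^{t-t_0}+(1-\kappa_-)\,e^{(3-d)(t-t_0)}\bigr],
\]
and since $\kappa_-+d-3>0$ and $1-\kappa_->0$ the bracket is positive, so the sign is that of $L_\infty$. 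This uses only Assumption~\ref{assumption-2} and works for every $a\in(0,1)$.

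Your Sturm-ratio argument has two concrete gaps relative to the stated lemma. First, it invokes Assumption~\ref{assumption-1} (through Corollary~\ref{corollary-1} and the claim $A_0<0$) and the global positivity $\Theta'>0$ on $\mathbb{R}$; the lemma assumes only Assumption~\ref{assumption-2}, and neither $A_0<0$ nor $\Theta'>0$ is proved in the paper. Second, your argument only covers $a\in(0,a_0)$: establishing $\partial_b\Psi_b>0$ on $[-\log b,(a-1)\log b]$ via Lemma~\ref{lemma:part_b_Psi_b_est} requires $b^{-1}\Theta'(a\log b)\gg b^{-2(1-a)-1}$, i.e.\ $b^{a\kappa_+}\gg b^{-2(1-a)}$, which is precisely the condition $a<a_0$; the same restriction arises when you evaluate $R$ at the right endpoint through Corollary~\ref{corollary-1}. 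For $a\in[a_0,1)$ the remainder in \eqref{eq:part_b_Psi_b_est_intermediate} can dominate the main term near $t_0$, so you cannot guarantee $\partial_b\Psi_b\neq 0$ there and the ratio $R$ is not well defined on the whole interval. The paper's explicit representation avoids both issues.
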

	
	\begin{proof}
		Recall that $\mathcal{M}_b = \mathcal{M}_{c(b)}$  for $\lambda = \lambda(b)$ and 
		$\mathcal{M}_b \partial_b \Psi_b= 0$ with 
		\begin{equation}
			\label{asymp-1}
			\partial_b \Psi_b(t) = e^t + \mathcal{O}(e^{3t}), \quad \mbox{\rm as} \;\; t \to -\infty.
		\end{equation}
		Similar to the proof of Lemma \ref{lemma:part_b_Psi_b_est}, we denote the second linearly independent solution of $\mathcal{M}_b \gamma = 0$ by $\Phi$ 
		and normalize it such that 
		\begin{equation}
			\label{asymp-2}
			\Phi(t) = (2-d)^{-1} e^{(3-d)t} + \mathcal{O}(e^{(5-d)t}), \quad \text{as } t\to-\infty.
		\end{equation}
		We are interested in the behavior of $\partial_c\Psi_{c(b)}$ for $t\in(-\infty,t_0)$, where $t_0:=(a-1)\log b$. It follows from 
		(\ref{sol-Psi-Infty-der}) and (\ref{eq:r_est_intermediate})  that for sufficiently large $b$, we have 
		\begin{align}
		\label{der-1}
			\partial_c\Psi_{c(b)}(t_0) &= L_\infty e^{\kappa_- t_0} \left[ 1+\mathcal{O}\left(\epsilon, e^{(\kappa_+ - \kappa_-) t_0}, e^{2t_0} \right) \right], \\
			\label{der-2}
			\partial_c\Psi'_{c(b)}(t_0) &= L_\infty \kappa_- e^{\kappa_- t_0} \left[ 1+\mathcal{O}\left(\epsilon, e^{(\kappa_+ - \kappa_-) t_0}, e^{2t_0} \right)\right].
		\end{align}
		Since $\partial_{c} \Psi_{c(b)}$ is a linear combination of $\partial_b \Psi_b$ and $\Phi$ by the linear superposition principle, we can express $\partial_c\Psi_{c(b)}$ as
		\begin{multline}
			\partial_c\Psi_{c(b)}(t) = e^{(d-4)t_0}\big[ (\partial_c \Psi_{c(b)}(t_0)\Phi'(t_0)-\partial_c\Psi'_{c(b)}(t_0)\Phi(t_0) )\partial_b\Psi_{b}(t) \\ + \left(\partial_c\Psi'_{c(b)}(t_0)\partial_b\Psi_{b}(t_0)-\partial_c\Psi_{c(b)}(t_0)\partial_b\Psi'_{b}(t_0)\right)\Phi(t) \big],
		\end{multline}
		where we have used the normalization $W(\partial_b \Psi_b,\Phi) = e^{(4-d)t}$ 
		of the Wronskian between the two solutions $\partial_b \Psi_b$ and $\Phi$. Since $t_0 \to -\infty$ as $b \to \infty$, we can use asymptotics (\ref{asymp-1}) and (\ref{asymp-2}) as well as the boundary condtions (\ref{der-1}) and (\ref{der-2}) to obtain for every $t \in (-\infty,t_0)$:
		\begin{multline}
			\partial_c\Psi_{c(b)}(t) = (d-2)^{-1} L_{\infty} e^{\kappa_- t_0} 
			\left[ (\kappa_- + d - 3) e^{t-t_0} + (1-\kappa_-) e^{(3-d) (t-t_0)} \right] \\
			\times \left[ 1+\mathcal{O}\left(\epsilon, e^{(\kappa_+ - \kappa_-) t_0}, e^{2t_0} \right)\right] \left[ 1+\mathcal{O}(e^{2t})\right],
			\label{eq:part_c_Psi_cb_asymptotics_neg_t}
		\end{multline}
		where $1 - \kappa_- > 0$ and 
		$$
		\kappa_- + d - 3 = \frac{1}{2} \left( d - 2 - \sqrt{d^2 - 16 d + 40}\right)> 0
		$$
		for every $d \geq 13$. Thus, the sign of $\partial_c \Psi_{c(b)}(t)$ 
		for every $t \in (-\infty,t_0)$ coincides with the sign of $L_{\infty}$ 
		so that the bound (\ref{non-vanish}) holds. 
	\end{proof}
	
	From Lemmas \ref{lemma:part_c_Psi_c_est} and \ref{lemma-2}, we can now prove Theorem \ref{theorem-main-infty} which 
	states that the Morse index of $\mathcal{L}_b : \mathcal{E}\mapsto \mathcal{E}^*$ coincides with the Morse index of $\mathcal{L}_{\infty} : \mathcal{E}_{\infty} \mapsto \mathcal{E}_{\infty}^*$  for every $b \in (b_0,\infty)$.\\
	
	\begin{proof1}{\em of Theorem \ref{theorem-main-infty}}
		By Sturm's Oscillation Theorem (see, e.g., Theorem 3.5 in \cite{simon}), 
		the Morse index of $\mathcal{L}_b : \mathcal{E}\mapsto \mathcal{E}^*$ coincides with the number of zeros of the function 
		$v(r)$ on $(0,\infty)$ satisfying $\mathcal{L}_b v = 0$ and $v(r) \to 0$ as $r \to \infty$. Due to the Emden--Fowler transformation (\ref{EF}), 
		the number of zeros of $v(r)$ on $(0,\infty)$ coincides 
		with the number of zeros of $\partial_c \Psi_{c(b)}(t)$ on $\mathbb{R}$ 
		since $\mathcal{M}_b \partial_c \Psi_{c(b)} = 0$ and $\partial_c \Psi_{c(b)}(t) \to 0$ as $t \to \infty$. 
		
		By Lemma \ref{lemma-2}, all zeros of $\partial_c \Psi_{c(b)}$ 
		are located in the interval $[(a-1)\log b, \infty)$ for fixed $a \in (0,1)$ 
		and sufficiently large $b > 0$. By Lemma \ref{lemma:part_c_Psi_c_est} 
		and simplicity of the zeros of $\partial_c\Psi_{c(b)}$ and $\partial_c \Psi_{\infty}$, the number of zeros of $\partial_c\Psi_{c(b)}$ and $\partial_c \Psi_{\infty}$ in $[(a-1)\log b, \infty)$ coincides since 
		$\lambda(b) \to \lambda_{\infty}$ and $c(b) \to c_{\infty}$ as $b \to \infty$. 
		All zeros of $\partial_c \Psi_{\infty}$ are located in the interval 
		$[(a-1)\log b,\infty)$ by Assumption \ref{assumption-2} with the expansion 
		(\ref{sol-Psi-Infty-der}) and give the Morse index of $\mathcal{L}_{\infty} : \mathcal{E}_{\infty} \mapsto \mathcal{E}_{\infty}^*$. Hence, the Morse indices of the 
		two operators are equal for every $b \in (b_0,\infty)$.
	\end{proof1}
		
	\appendix
	\section{Proof of Proposition \ref{lemma:Psi_c_neg_t}}
	\label{appendix-a}
		
	Let $\Sigma(t):=\Psi_c(t)-\Psi_\infty(t)$. It follows from \eqref{eq-psi-singular} that $\Sigma$ satisfies the following equation:
	\begin{equation}
		\label{M-infty}
		\mathcal{M}_\infty\Sigma = \mathcal{F}(\Sigma)(t),
	\end{equation}
	where $\mathcal{M}_\infty$ is defined by \eqref{eq:L_infty_op} and
	\begin{equation*}
		\mathcal{F}(\Sigma)(t) := -(\lambda - \lambda_{\infty}) e^{2t} (\Psi_{\infty}(t) + \Sigma(t)) - 3 \Psi_{\infty}(t) \Sigma(t)^2 - \Sigma(t)^3.
	\end{equation*}
	Since $\Psi_\infty(t)\to \sqrt{d-3}$ as $t\to-\infty$, as it follows from (\ref{sol-Psi-infty-asymptotics}), we can pick two linearly independent solutions $r_1$, $r_2$ to $\mathcal{M}_\infty r =0$ such that
	\begin{equation}
		r_1(t)=\mathcal{O}(e^{\kappa_-t}), \quad r_2(t)=\mathcal{O}(e^{\kappa_+t}), \quad \text{ as } t\to-\infty,
		\label{eq:Sigma_1_2_beh_neg_t}
	\end{equation}
	where $\kappa_- < \kappa_+ < 0$ are given by (\ref{kappa-pm}). Using the Liouville's formula, we normalize the Wronskian according to the relation:
	\begin{equation}
		\label{Wronskian-r}
		W(r_1,r_2)(t)=r_1(t) r_2'(t) - r_1'(t) r_2(t)=e^{-(d-4)t}.
	\end{equation}
	By the variation of parameters method, we rewrite the differential equation (\ref{M-infty}) as an integral equation for every $t\in[(a-1)\log b,0]$:
	\begin{multline}
		\Sigma(t)=\Sigma(0)\left[r_1(t)r_2'(0)-r_1'(0)r_2(t)\right]+\Sigma'(0)\left[r_1(0)r_2(t)-r_1(t)r_2(0)\right]
		\\ + \int_t^0e^{(d-4)t'}\left[r_1(t)r_2(t')-r_1(t')r_2(t)\right]
		\mathcal{F}(\Sigma)(t')dt'.
		\label{eq:volterra_Psi_C_monotone}
	\end{multline}
	In order to use Banach fixed-point iterations, we introduce $\tilde{\Sigma}(t):=e^{-\kappa_-t}\Sigma(t)$, which satisfies 
	$\tilde{\Sigma}(t) = \mathcal{A}(\tilde{\Sigma})(t)$, where 
	\begin{align}
		\begin{split}
			\mathcal{A}(\tilde{\Sigma})(t) &=\Sigma(0)e^{-\kappa_-t}\left[r_1(t)r_2'(0)-r_1'(0)r_2(t)\right]+\Sigma'(0)e^{-\kappa_-t}\left[r_1(0)r_2(t)-r_1(t)r_2(0)\right]\\
			&-(\lambda-\lambda_{\infty})\int_t^0K_2(t,t')e^{2t'}\left[e^{-\kappa_-t'}\Psi_{\infty}(t')+\tilde{\Sigma}(t')\right]dt'\\
			&-\int_t^0K_2(t,t')\left[3e^{\kappa_-t'}\Psi_{\infty}(t')\tilde{\Sigma}^2(t')+e^{2\kappa_-t'}\tilde{\Sigma}^3(t')\right]dt',
		\end{split}
		\label{eq:volterra_Psi_C_monotone_tilde}
	\end{align}
	where the kernel $K_2(t,t')$ is defined as
	\begin{equation}
		\label{ker-2}
		K_2(t,t'):=e^{\kappa_-(t'-t) - (\kappa_+ + \kappa_-)t'}\left[r_1(t)r_2(t')-r_1(t')r_2(t)\right].
	\end{equation}
	It follows from \eqref{eq:Sigma_1_2_beh_neg_t} that 
	$K_2(t,t')\sim 1+e^{(\kappa_+-\kappa_-)(t-t')}$ as $t-t' \to-\infty$, which means that there exists some constant $K_0>0$, such that
	\begin{equation}
		\label{bound-ker-2}
		\sup_{-\infty < t \leq t' \leq 0} |K_2(t,t')|\leq K_0.
	\end{equation}
	It follows from (\ref{eq:Psi_c_bound_pos_t}) that 
	there exists some constant $C_0 > 0$ such that 
	\begin{equation*}
		|\Sigma(0)|+|\Sigma'(0)|\leq C_0\left(|\lambda-\lambda_{\infty}|+|c-c_\infty|\right).
	\end{equation*}
	The integral operator $\mathcal{A}(\tilde{\Sigma})$ in (\ref{eq:volterra_Psi_C_monotone_tilde})
	is estimated for every $\tilde{\Sigma}\in L^\infty((a-1)\log b,0)$ as
	\begin{multline}
		\|\mathcal{A}(\tilde{\Sigma})\|_\infty \leq C_0\Big[ |\lambda-\lambda_\infty|+|c-c_\infty|+|\lambda-\lambda_{\infty}|\|\tilde{\Sigma}\|_\infty\\+b^{-\kappa_- (1-a)}\|\tilde{\Sigma}\|^2_\infty+b^{-2\kappa_-(1-a)}\|\tilde{\Sigma}\|^3_\infty \Big].
	\end{multline}
	Similar estimate applies to $\|\mathcal{A}(\tilde{\Sigma}_1) - \mathcal{A}(\tilde{\Sigma}_2)\|_\infty$. 
	The estimates show that the integral operator $\mathcal{A}(\tilde{\Sigma})$ is closed and is a contraction in the ball $B_\delta\subset L^\infty((a-1)\log b,0)$ of the small radius $\delta := 2C_0\epsilon b^{\kappa_-(1-a)}$, provided that $(\lambda,c)$ satisfy the bound (\ref{lambda-c}) and $\epsilon>0$ is sufficiently small. By the Banach fixed-point theorem, there exists a unique fixed point of $\mathcal{A}(\tilde{\Sigma})$ satisfying
	\begin{equation}
		\|\tilde{\Sigma}\|_\infty \leq 2 C_0 \epsilon b^{\kappa_-(1-a)},
	\end{equation}
	which proves the bound \eqref{eq:Psi_c_bound_neg_t} after going back to the original variable $\Sigma$ and redefining $C_0$.

\end{document}